\renewcommand{\geq}{\geqslant}
\renewcommand{\leq}{\leqslant}
\newcommand{\arxiv}[1]{\href{http://arxiv.org/abs/#1}{\texttt{arXiv:#1}}}
\newtheorem{thm}{Theorem}[section]
\newtheorem{prop}[thm]{Proposition}
\newtheorem{lem}[thm]{Lemma}
\newtheorem{cor}[thm]{Corollary}
\theoremstyle{definition}
\newtheorem{ex}[thm]{Example}
\theoremstyle{remark}
\newtheorem{rem}[thm]{Remark}
\newcommand{\N}{\mathbb{N}}
\newcommand{\R}{\mathbb{R}}
\newcommand{\K}{\mathbb{K}}
\newcommand{\A}{\mathcal{A}}
\newcommand{\C}{\mathcal{C}}
\renewcommand{\S}{\mathcal{S}}
\newcommand{\G}{\mathcal{G}}
\newcommand{\E}{\mathcal{E}}
\renewcommand{\P}{\mathbb{P}}
\newcommand{\PP}{\mathbf{P}}
\newcommand{\NP}{\mathbf{NP}}
\newcommand{\len}{\mathop{\rm len}}
\newcommand{\type}{\mathop{\rm type}}
\newcommand{\Graph}{\mathop{\rm Graph}}
\newcommand{\NCSym}{\rm NCSym}
\newcommand{\NCQSym}{\rm NCQSym}
\title{\bf Plurigraph coloring and scheduling problems}
\author{John Machacek\\
\small Department of Mathematics\\[-0.8ex]
\small Michigan State University\\[-0.8ex] 
\small East Lansing, MI 48824, USA\\
\small\tt machace5@math.msu.edu\\
}
\date{\small Mathematics Subject Classifications: 05C15, 05E05}
\begin{document}

\maketitle

\begin{abstract}
We define a new type of vertex coloring which generalizes vertex coloring in graphs, hypergraphs, and simplicial complexes.
This coloring also generalizes oriented coloring, acyclic coloring, and star coloring.
There is an associated symmetric function in noncommuting variables for which we give a deletion-contraction formula.
In the case of graphs this symmetric function in noncommuting variables agrees with the chromatic symmetric function in noncommuting variables of Gebhard and Sagan.
Our vertex coloring is a special case of the scheduling problems defined by Breuer and Klivans.
We show how the deletion-contraction law can be applied to certain scheduling problems.
Also, we show that the chromatic symmetric function determines the degree sequence of uniform hypertrees, but there exist pairs of $3$-uniform hypertrees which are not isomorphic yet have the same chromatic symmetric function.

  \bigskip\noindent \textbf{Keywords:} graph coloring, deletion-contraction, chromatic symmetric function, scheduling problem, plurigraph
\end{abstract}

\section{Introduction}
We define a generalization of vertex coloring which has many variations of graph coloring as special cases.
It generalizes the usual notion of proper coloring in graphs and hypergraphs.
We also show that a certain type of coloring in simplicial complexes corresponds to coloring uniform hypergraphs, and hence this version of coloring in simplicial complexes is included in our theory.
Furthermore, we find that oriented coloring, acyclic coloring, and star coloring show up as special cases of our generalized coloring.
Associated to our generalization of vertex coloring we have a symmetric function in noncommuting variables which generalizes the chromatic symmetric function in noncommuting variables defined by Gebhard and Sagan in~\cite{GS}.
The vertex coloring we define can be thought of as a special class of the scheduling problems defined by Breuer and Klivans, and our symmetric function in noncommuting variables is an instance of the scheduling quasisymmetric function in noncommuting variables from~\cite{BK}.
Our theory allows for a unified way to study a chromatic symmetric function for all the types of coloring listed above.

In this section we will give the basic definitions of symmetric functions in noncommuting variables, introduce scheduling problems, and review vertex coloring in graphs and hypergraphs.
Section~\ref{sec:gen} is where we define \emph{plurigraphs} and our new notion of coloring.
We also prove a deletion-contraction law.
The connections with the theory of scheduling problems is given in Section~\ref{sec:sched}.
In Section~\ref{sec:dist} we discuss what information the chromatic symmetric function can tell us about plurigraphs and uniform hypertrees.
We will show how oriented coloring, acyclic coloring, and star coloring show up as instances of plurigraph coloring in Section~\ref{sec:other}.

An extended abstract which included some results in this paper appeared in the Proceedings of the 28th International Conference on Formal Power Series and Algebraic Combinatorics (FPSAC) 2016, Vancouver, Canada~\cite{fpsac}.

\subsection{NCSym and NCQSym}
We let $\P = \{1,2,\dots\}$ denote the set of positive integers and for any $n \in \P$ we let $[n] = \{1,2,\dots,n\}$.
A \emph{partition} of $[n]$ is $\pi = B_1 / B_2/ \cdots / B_{\ell}$ where $\biguplus_{i=1}^l B_i = [n]$.
When writing partitions we often suppress notation by simply writing $12/3$ in place of $\{1,2\}/\{3\}$.
Here we call each $B_i$ a \emph{block} of the partition $\pi$ and the order of the blocks is irrelevant.
For example $12/3$ and $3/12$ denote the same partition of $[3]$.
Take noncommuting variables $\{y_1, y_2, \dots\}$ and a partition $\pi$ of $[n]$ for some $n \in \P$, then the \emph{monomial nc-symmetric function} $m_{\pi}$ is defined by
\begin{equation}
m_{\pi} := \sum_{i_1,i_2,\dots,i_n} y_{i_1}y_{i_2} \cdots y_{i_n}
\label{eq:monomial}
\end{equation}
where the sum is over all sequences $(i_1,i_2, \dots, i_n) \in \P^n$ satisfying the condition that $i_j = i_k$  if and only if $j$ and $k$ are in the same block of the partition $\pi$.
For example,
\[m_{12/3} = y_1y_1y_2 + y_2y_2y_1 + y_1y_1y_3 + y_3y_3y_1 + \cdots\]
is a monomial nc-symmetric function.
The \emph{powersum nc-symmetric function} $p_{\pi}$ is defined by
\begin{equation}
p_{\pi} := \sum_{\sigma \geq \pi} m_{\sigma}
\label{eq:powersum}
\end{equation}
where $\sigma \geq \pi$ is taken in the lattice of partitions of $[n]$ partially ordered by refinement.
For example,
\[p_{12/3} = m_{12/3} + m_{123}\]
is a powersum nc-symmetric function.

We denote the lattice of partitions of $[n]$ by $\Pi_n$.
We now define $\NCSym$ the algebra of \emph{nc-symmetric functions} to be the $\K$-space generated by either the basis of monomial nc-symmetric functions $\{m_{\pi}: \pi \in \Pi_n, n \in \P\}$ or the basis of powersum nc-symmetric functions $\{p_{\pi} : \pi \in \Pi_n, n \in \P\}$.
The algebra $\NCSym$ can be defined over any field $\K$.
We will assume $\K$ is a field of characteristic zero throughout since the assumption of characteristic zero will be needed for some results.

A \emph{composition} of $[n]$ is $\Phi = (B_1, B_2, \dots, B_{\ell})$ where $\biguplus_{i=1}^{\ell} B_i = [n]$.
When writing composition we often suppress notation in the same manner as for a partition by simply writing $(12,3)$ in place of $(\{1,2\},\{3\})$.
We again call each $B_i$ a \emph{block} of the composition $\Phi$.
The order of the blocks of a composition of $[n]$ is relevant.
For example, $(12,3)$ and $(3,12)$ denote different compositions of $[3]$.
Taking a composition $\Phi = (B_1,B_2,\dots,B_{\ell})$ of $[n]$ for some $n \in \P$ the \emph{monomial nc-quasisymmetric function} is defined by
$$M_{\Phi} := \sum_{i_1,i_2,\dots,i_n} y_{i_1}y_{i_2} \cdots y_{i_n}$$
where the sum is over all sequences $(i_1,i_2, \dots, i_n) \in \P^n$ satisfying the condition that for $j,k \in [n]$ where $j \in B_p$ and $k \in B_q$ with $p \leq q$ we have that 
\begin{itemize}
\item $i_j \leq i_k$
\item $i_j = i_k$ if and only if $p=q$.
\end{itemize}
We let $\Delta_n$ denote the collection of all compositions of $[n]$ and define the algebra of \emph{nc-quasisymmetric functions}, which we denote by $\NCQSym,$ to be the $\K$-space generated by the basis of monomial nc-quasisymmetric functions $\{M_{\Phi} : \Phi \in \Delta_n, n \in \P\}$.
As an example we have
\begin{align*}
M_{(12,3)} &= y_1y_1y_2 + y_1y_1y_3 + y_2y_2y_3 + \cdots\\
M_{(3,12)} &= y_2y_2y_1 + y_3y_3y_1 + y_3y_3y_2 + \cdots
\end{align*}
as elements of $\NCQSym$.
We note that $\NCSym$ is contained in $\NCQSym$ as a proper subset.
Given $\pi \in \Pi_n$ we have
$$m_{\pi} = \sum_{\Phi} M_{\Phi}$$
where the sum is over all compositions $\Phi$ of $[n]$ which have the same blocks as $\pi$.
One can check $m_{12/3} = M_{(12,3)} + M_{(3,12)}$ using the previous examples in this section.

Given a monomial $y_{i_1}y_{i_2} \cdots y_{i_n}$ and a sequence $(r_0,r_1,\dots,r_k) \in \P^{k+1}$ with $k < n$ we define the \emph{induction} of the monomial with respect to the sequence by
$$y_{i_1}y_{i_2} \cdots y_{i_n} \uparrow^{(r_0,r_1,\dots,r_k)} := y_{i_1}y_{i_2} \cdots y_{i_{n-k}}^{1+r_k}\cdots y_{i_{n-1}}^{1+r_1}y_{i_n}^{1+r_0}.$$
Extending this operation linearly we get induction of any element of $\NCQSym$, and hence any element of $\NCSym$ as well.
This generalizes induction as defined in~\cite{GS} where what we denote by $\uparrow^{(1)}$ is used.
We can define induction on compositions in a way which is compatible with induction on $\NCQSym$.
Given a composition $\Phi \in \Delta_n$ , $r \in \P$, and $t \in \N$ define $\Phi\uparrow^{s,t}$ to be the composition of $[n+s]$ obtained from $\Phi$ by first replacing $n-j$ with $n-j+s$ for $0 \leq j < t$ and then placing $n-t+1, n-t+2, \dots, n-t+s$ in the same block as $n-t$.
For $(r_0, r_1, \dots, r_k) \in \P^{k+1}$ with $k < n$ we define $\Phi\uparrow^{(r_0, r_1, \dots, r_k)} := \Phi\uparrow^{(r_0, r_1, \dots, r_k),0}$ where we have the recursion $\Phi\uparrow^{(r_0, r_1, \dots, r_j),t} = \left(\Phi\uparrow^{r_0,t}\right)\uparrow^{(r_1, r_2, \dots, r_j),t+r_0+1}$ with $\Phi\uparrow^{(),t} = \Phi$.

\begin{lem}
If $\Phi \in \Delta_n$ and $(r_0, r_1, \dots, r_k) \in \P^{k+1}$ with $k < n$, then $M_{\Phi} \uparrow^{(r_0, r_1, \dots, r_k)} = M_{\Phi \uparrow^{(r_0, r_1, \dots, r_k)}}$.
\label{lem:induction}
\end{lem}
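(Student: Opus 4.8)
The plan is to prove the lemma by isolating the single-step operation $\uparrow^{s,t}$ and then iterating, organizing everything as an induction on the length $k+1$ of the sequence. The heart of the argument is a single-step identity: for any $\Psi \in \Delta_N$, any $s \in \P$, and any $t \in \N$ with $t < N$, I claim $M_{\Psi \uparrow^{s,t}}$ is obtained from $M_\Psi$ by replacing the single factor occupying position $N-t$ by its $(1+s)$-th power, i.e.\ by inserting $s$ extra copies of $y_{i_{N-t}}$ immediately after position $N-t$ in every monomial. To see this I would unwind the definition of $\Psi \uparrow^{s,t}$: the first step relabels the last $t$ elements $N, N-1, \dots, N-t+1$ as $N+s, \dots, N-t+1+s$, a harmless reindexing that leaves the block order intact, and the second step adjoins the $s$ fresh elements $N-t+1, \dots, N-t+s$ to the block containing $N-t$. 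Thus $\Psi\uparrow^{s,t}$ has the same blocks in the same order as $\Psi$, with exactly one block enlarged by $s$ elements inserted directly after $N-t$.

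The main thing to check is that the quasisymmetric validity conditions transfer correctly, and I expect this, together with the position bookkeeping, to be the only delicate point. A sequence $(j_1,\dots,j_{N+s})$ is valid for $\Psi\uparrow^{s,t}$ exactly when its restriction to the original elements is valid for $\Psi$ and the $s$ new elements all receive the color of the block of $N-t$, i.e.\ $j_{N-t+1} = \cdots = j_{N-t+s} = i_{N-t}$. Since the new elements lie in an already-present block, every relation $i_j \le i_k$ and every equivalence ($i_j = i_k$ iff same block) involving them is inherited verbatim from $N-t$, so they neither impose new constraints nor break existing ones. Reading off the monomial of such a coloring gives $y_{i_1}\cdots y_{i_{N-t}}\,y_{i_{N-t}}^{\,s}\,y_{i_{N-t+1}}\cdots y_{i_N} = y_{i_1}\cdots y_{i_{N-t-1}} y_{i_{N-t}}^{\,1+s} y_{i_{N-t+1}}\cdots y_{i_N}$, establishing the single-step identity.

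With this in hand I would unwind the recursion defining $\Phi\uparrow^{(r_0,\dots,r_k)}$, which expresses it as the chain $\Phi \uparrow^{r_0,t_0}\uparrow^{r_1,t_1}\cdots\uparrow^{r_k,t_k}$ with $t_0 = 0$ and $t_j = t_{j-1}+r_{j-1}+1$, so that $t_j = j + \sum_{i<j} r_i$. Before the $j$-th step the composition has size $N_j = n + \sum_{i<j} r_i$, and the key arithmetic identity $N_j - t_j = n-j$ shows that the $j$-th single step inflates precisely the factor at the original position $n-j$; these positions are untouched by earlier steps, which act only on positions exceeding $n-j$. The hypotheses $k < n$ and $r_i \in \P$ guarantee $t_j < N_j$ throughout, so the single-step identity applies at every stage.

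Finally I would assemble the pieces by induction on $k$ (equivalently, by applying the single-step identity $k+1$ times along the chain). Each step raises the power of the factor at position $n-j$ from $1$ to $1+r_j$, and since the steps act on the distinct decreasing positions $n, n-1, \dots, n-k$ they do not interfere. The net effect on a monomial $y_{i_1}\cdots y_{i_n}$ of $M_\Phi$ is $y_{i_1}\cdots y_{i_{n-k}}^{1+r_k}\cdots y_{i_{n-1}}^{1+r_1} y_{i_n}^{1+r_0}$, which is exactly $y_{i_1}\cdots y_{i_n}\uparrow^{(r_0,\dots,r_k)}$. Summing over all valid colorings of $\Phi$ then yields $M_{\Phi\uparrow^{(r_0,\dots,r_k)}} = M_\Phi \uparrow^{(r_0,\dots,r_k)}$, as desired.
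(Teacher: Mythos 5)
Your proof is correct. The paper gives no written argument for this lemma---it simply asserts that it ``follows from the definition of the induction operation''---and your proof is precisely the careful unwinding of those definitions that this assertion leaves implicit: the single-step identity for $\uparrow^{s,t}$, the bookkeeping $t_j = j + \sum_{i<j} r_i$ and $N_j - t_j = n-j$ along the recursion, and the observation that the $k+1$ steps act on distinct, non-interfering positions.
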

The lemma follows from the definition of the induction operation.
Induction can be applied to partitions in the same way as compositions but without regarding the ordering of the blocks.
Lemma~\ref{lem:induction} then implies that
\begin{align*}
m_{\pi}\uparrow^{(r_0, r_1, \dots, r_k)} &= m_{\pi \uparrow^{(r_0, r_1, \dots, r_k)}} &p_{\pi}\uparrow^{(r_0, r_1, \dots, r_k)} &= p_{\pi\uparrow^{(r_0, r_1, \dots, r_k)}}
\end{align*}
whenever $\pi \in \Pi_n$ and $k < n$.
We now demonstrate the induction operation with an example.

\begin{ex}
We take $(1,2) \in \Delta_2$ and $(2,1) \in \P^2$.
First let us consider induction on the composition $(1,2)$.
\begin{align*}
(1,2)\uparrow^{(2,1)} &= (1,2)\uparrow^{(2,1),0}\\
&= (1,234)\uparrow^{(1),3}\\
&= (12,345)
\end{align*}
Next we consider induction on the nc-monomial quasisymmetric function and see that is compatible with induction on the composition.
\begin{align*}
M_{(1,2)} &= y_1y_2 + y_1y_3 + y_2y_3 + \cdots\\
M_{(1,2)}\uparrow^{(2,1)} &= y_1^2y_2^3 + y_1^2y_3^3 + y_2^2y_3^3 + \cdots\\
M_{(1,2)\uparrow^{(2,1)}} &= M_{(12,345)} = y_1^2y_2^3 + y_1^2y_3^3 + y_2^2y_3^3 + \cdots
\end{align*}
\end{ex}

\subsection{Scheduling problems}
As defined in \cite{BK} a \emph{scheduling problem} on $n$ elements is a boolean formula $S$ over the atomic formulas $(x_i \leq x_j)$ for $i, j \in [n]$.
We are interested in solutions to a scheduling problem where each $x_i$ takes a value in $\P$.
A function $f:[n] \to \P$ is a solution to the scheduling problem $S$ if when $x_i = f(i)$ the boolean formula $S$ is true.
We then get \emph{the scheduling nc-quasisymmetric function} $\S_S$ defined by
$$\S_S := \sum_f \prod_{i=1}^n y_{f(i)}$$
where the sum is taken over all solutions $f$ to the scheduling problem $S$.
Given $\Phi = (B_1,B_2,\dots,B_{\ell})$ a composition of $[n]$ we can view $\Phi$ as a map $\Phi:[n] \to [\ell]$ by $f(i) = j$ if $i \in B_j$.
We say a set composition $\Phi$ solves $S$ if its corresponding map does.
In this way we see that $\S_S$ is indeed an element of $\NCQSym$ and can be expressed in the monomial basis as
$$\S_S = \sum_{\Phi} M_{\Phi}$$
where the sum is over set compositions $\Phi$ of $[n]$ which solve $S$.

\subsection{Coloring in graphs, hypergraphs, and simplicial complexes}
For us a graph is will mean a finite undirected graph with loops and multiple edges allowed.
We will write a graph $G$ as a pair $G = (V, E)$ where where $V$ is a finite set and $E$ is finite multiset of unordered pairs of (not necessarily distinct) elements of $V$.
We call elements of $V$ vertices and elements of $E$ edges.
When $|V| = n$ we will usually assume without stating that $V = [n]$.
From identifying $V$ with $[n]$ we obtain an ordering of the vertices.
Given vertices $u,v \in V$, the edge between $u$ and $v$ is written $uv \in E$ where $uv = vu$.
A map $f:V \to \P$ is called a proper coloring of $G$ if it produces no monochromatic edge.
That is $f$ is a proper coloring if for all $uv \in E$ we have that $f(u) \ne f(v)$.

A hypergraph $H$ is a pair $H = (V,E)$ where $E$ a collection of nonempty subsets of $V$.
We call the elements of $V$ vertices and elements of of $E$ hyperedges.
If for each $e \in E$ we have that $|e| = s$, then we call $H$ an $s$-uniform hypergraph.
A map $f:V \to \P$ is a proper coloring of $H$ if it produces no monochromatic hyperedge.

An abstract simplicial complex $\Gamma$ on a vertex set $V$ is a collection of subsets of $V$ such that for all $v \in V$ we have $\{v\} \in \Gamma$ and if $A \in \Gamma$ then $B \in \Gamma$ for any $B \subseteq A$.
Elements of $\Gamma$ are called faces and faces which are maximal with respect to inclusion are called facets.
We call $A \in \Gamma$ an $s$-simplex if $|A| = s+1$.
Given a positive integer $s,$ a map $f:V \to \P$ is an $s$-simplicial coloring of $\Gamma$ if it produces no monochromatic $s$-simplex.
Coloring in graphs and hypergraphs is classical, but this notion of coloring in simplicial complexes is more recent and defined in~\cite{dmn:vcsc}.

We will now show that coloring in simplicial complexes can be thought of as coloring in uniform hypergraphs and conversely.
Given a simplicial complex $\Gamma$ and a nonnegative integer $s$ we get an $(s+1)$-uniform hypergraph $H^{(s)}(\Gamma) = (V(\Gamma), E^{(s)}(\Gamma))$ with edge set defined by
$$E^{(s)}(\Gamma) := \{A \in \Gamma: |A| = s+1\}.$$

\begin{lem}
A map $f$ is an $s$-simplicial coloring of a simplicial complex $\Gamma$ if and only if $f$ is a proper coloring of $H^{(s)}(\Gamma)$.
\label{lem:equiv2}
\end{lem}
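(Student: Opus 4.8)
The plan is to prove this directly from the definitions, since the entire content of the lemma is that the sets forbidden to be monochromatic on the two sides of the equivalence coincide. First I would observe that both notions refer to maps with the same domain: an $s$-simplicial coloring of $\Gamma$ and a proper coloring of $H^{(s)}(\Gamma)$ are both maps $f : V(\Gamma) \to \P$, since by construction the hypergraph $H^{(s)}(\Gamma)$ has vertex set $V(\Gamma)$. Thus it suffices to compare the conditions that each coloring must satisfy.

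Next I would unpack what ``monochromatic'' means on each side. By definition, $f$ fails to be an $s$-simplicial coloring of $\Gamma$ exactly when there is some $s$-simplex $A \in \Gamma$, that is, some face $A$ with $|A| = s+1$, on which $f$ is constant. On the other side, $f$ fails to be a proper coloring of $H^{(s)}(\Gamma)$ exactly when there is some hyperedge $e \in E^{(s)}(\Gamma)$ on which $f$ is constant. The key step is then simply to invoke the defining equation $E^{(s)}(\Gamma) = \{A \in \Gamma : |A| = s+1\}$, which states precisely that the hyperedges of $H^{(s)}(\Gamma)$ are exactly the $s$-simplices of $\Gamma$.

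Combining these observations, the collection of subsets of $V(\Gamma)$ that $f$ is forbidden from making monochromatic is identical in the two settings, so $f$ produces a monochromatic $s$-simplex if and only if it produces a monochromatic hyperedge, and taking contrapositives yields the stated biconditional. I do not anticipate any genuine obstacle here; the only points requiring care are the bookkeeping that the two colorings share the same vertex set and the observation that ``monochromatic on a set'' means literally the same thing — namely that $f$ is constant on that set — in both the simplicial and the hypergraph context, so that the definitional translation in the description of $E^{(s)}(\Gamma)$ carries all the weight.
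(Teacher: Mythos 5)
Your proof is correct and is exactly the argument the paper intends: the paper states Lemma~\ref{lem:equiv2} without proof, treating it as immediate from the definition $E^{(s)}(\Gamma) = \{A \in \Gamma : |A| = s+1\}$, which is precisely the identification of forbidden monochromatic sets that carries your argument. Your careful unpacking of ``monochromatic'' on both sides is a fine way to make that omitted proof explicit.
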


Next we show that coloring in a uniform hypergraph can be thought of as an instance of coloring in a simplicial complex.
Given any $H = (V,E)$ we get a simplicial complex $\Gamma(H)$ on the vertex set $V$ defined by
$$\Gamma(H) = \{A : A \subseteq e \in E\} \cup \{\{v\} : v \in V\}.$$

\begin{lem}
A map $f$ is a proper coloring of an $(s+1)$-uniform hypergraph $H$ if and only if $f$ is an $s$-simplicial coloring of $\Gamma(H)$.
\label{lem:equiv1}
\end{lem}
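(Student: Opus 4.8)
The plan is to reduce the equivalence to a single combinatorial observation: that the $s$-simplices of $\Gamma(H)$ are precisely the hyperedges of $H$. Once this is established, the two coloring conditions become literally the same statement, since each asserts that no edge (respectively simplex) is monochromatic.

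First I would unwind the definitions. By definition an $s$-simplex of $\Gamma(H)$ is a face $A \in \Gamma(H)$ with $|A| = s+1$, and $f$ is an $s$-simplicial coloring exactly when no such $A$ has $f$ constant on it. On the hypergraph side, $f$ is a proper coloring of the $(s+1)$-uniform hypergraph $H$ exactly when no $e \in E$ has $f$ constant on it. So it suffices to prove the set equality $\{A \in \Gamma(H) : |A| = s+1\} = E$.

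Next I would identify the faces of $\Gamma(H)$ of cardinality $s+1$. Recall that $\Gamma(H) = \{A : A \subseteq e \in E\} \cup \{\{v\} : v \in V\}$. A face coming from the first set is a subset $A$ of some $e \in E$; if moreover $|A| = s+1$, then since $H$ is $(s+1)$-uniform we have $|e| = s+1 = |A|$ together with $A \subseteq e$, which forces $A = e$. Conversely every $e \in E$ is a face of $\Gamma(H)$ of size $s+1$. The faces coming from the second set are the singletons, of cardinality $1$, so they contribute an $s$-simplex only in the degenerate case $s = 0$. Hence for $s \geq 1$ the $s$-simplices of $\Gamma(H)$ are exactly the elements of $E$, as claimed. (When $s = 0$ both notions degenerate in the same way: each edge is a single vertex, so once $E$ is nonempty neither a proper coloring nor a $0$-simplicial coloring exists, and the equivalence still holds trivially.)

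With this identification in hand the biconditional is immediate: $f$ produces a monochromatic $s$-simplex if and only if it produces a monochromatic hyperedge, so $f$ is an $s$-simplicial coloring of $\Gamma(H)$ precisely when it is a proper coloring of $H$. I do not expect a substantial obstacle here; the only point requiring care is the extra collection of singletons adjoined in the definition of $\Gamma(H)$, and I would be explicit that these never create a spurious $s$-simplex for $s \geq 1$, which is exactly what keeps the simplex set equal to $E$ on the nose.
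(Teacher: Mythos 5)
Your proof is correct and is essentially the argument the paper intends: the paper states this lemma without proof, treating it as immediate from the observation (made explicit in its remark on facets) that the $(s+1)$-element faces of $\Gamma(H)$ are precisely the hyperedges of the $(s+1)$-uniform hypergraph $H$, which is exactly your key set equality $\{A \in \Gamma(H) : |A| = s+1\} = E$. Note only that your parenthetical about $s=0$ is moot, since the paper defines $s$-simplicial coloring only for positive integers $s$, so that degenerate case lies outside the lemma's scope.
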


Note a simplicial complex is determined completely by its facets, and we can rephrase the property of being an $s$-simplicial coloring in terms of facets.
For a simplicial complex $\Gamma$ with vertex set $V$ a map $f:V \to \P$ is an $s$-simplicial coloring if and only if each facet of $\Gamma$ contains at most $s$ vertices of a given color.
If $H$ is an $(s+1)$-uniform hypergraph, then $\Gamma(H)$ is a simplicial complex with facets given by the hyperedges of $H$ along with possibly a some isolated vertices.
In particular, the simplicial complex $\Gamma(H)$ can be obtained from $H$ in linear time. 
 
We define the decision problem $(k,s)$-$\textsc{simplicial colorable}$ which takes as input a simplicial complex $\Gamma$ and outputs true if and only if $\Gamma$ can be $s$-simplicial colored using at most $k$ colors.
Similarly we define the decision problem $(k,s)$-$\textsc{colorable}$ which takes as input a $s$-uniform hypergraph $H$ and outputs true if and only if $H$ can be properly colored using at most $k$ colors.
Thus Lemma~\ref{lem:equiv1} says by considering $H \mapsto \Gamma(H)$ we have a polynomial time reduction from $(k,s+1)$-$\textsc{colorable}$ to $(k,s)$-$\textsc{simplicial colorable}$.
We use the fact that $(k,s)$-$\textsc{colorable}$ is $\NP$-complete unless $k=1$, $s=1$, or $(k,s)=(2,2)$~\cite{Lovasz}.

\begin{prop}
We have $(k,s)$-$\textsc{simplicial colorable} \in \PP$ for $k = 1$ or $(k,s) = (2,1)$ 
and for all other pairs $(k,s)$ we have that $(k,s)$-$\textsc{simplicial colorable}$ is $\NP$-complete.
\label{prop:NP}
\end{prop}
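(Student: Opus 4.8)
The plan is to reduce the whole classification to the hypergraph coloring classification stated just above, using the two equivalences already in hand. First I would show that for each fixed $s \ge 1$ the problems $(k,s)$-$\textsc{simplicial colorable}$ and $(k,s+1)$-$\textsc{colorable}$ are polynomial-time equivalent. One direction is the map $H \mapsto \Gamma(H)$: by the remark following Lemma~\ref{lem:equiv1} this is computable in linear time, and by Lemma~\ref{lem:equiv1} it sends an $(s+1)$-uniform hypergraph $H$ to a complex whose $s$-simplicial $k$-colorings are exactly the proper $k$-colorings of $H$, giving $(k,s+1)$-$\textsc{colorable} \le_p (k,s)$-$\textsc{simplicial colorable}$. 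The other direction is $\Gamma \mapsto H^{(s)}(\Gamma)$: this produces an $(s+1)$-uniform hypergraph, and by Lemma~\ref{lem:equiv2} its proper $k$-colorings are precisely the $s$-simplicial $k$-colorings of $\Gamma$, giving $(k,s)$-$\textsc{simplicial colorable} \le_p (k,s+1)$-$\textsc{colorable}$.

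Next I would feed this equivalence into the stated classification, in which $(k,t)$-$\textsc{colorable}$ lies in $\PP$ exactly when $k=1$, $t=1$, or $(k,t)=(2,2)$, and is $\NP$-complete otherwise. Membership of the simplicial problem in $\NP$ is immediate: a map $f:V\to[k]$ is a polynomial-size certificate, and for fixed $s$ one checks in polynomial time that no face of size $s+1$ is monochromatic. For hardness I set $t=s+1$; since $s\ge1$ forces $t\ge2$, the only surviving polynomial exceptions are $k=1$ and $(k,t)=(2,2)$, the latter being $(k,s)=(2,1)$. Hence whenever $k\ge2$ and $(k,s)\ne(2,1)$ the problem $(k,s+1)$-$\textsc{colorable}$ is $\NP$-complete, and the forward reduction $H\mapsto\Gamma(H)$ transfers $\NP$-hardness to $(k,s)$-$\textsc{simplicial colorable}$. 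For the two polynomial cases I would either invoke the equivalence with $(1,s+1)$- and $(2,2)$-$\textsc{colorable}$, or argue directly: a single color makes every $s$-simplex monochromatic, so $(1,s)$-$\textsc{simplicial colorable}$ amounts to testing whether $\Gamma$ has any face of size $s+1$, while $(2,1)$-$\textsc{simplicial colorable}$ is exactly testing bipartiteness of the graph $H^{(1)}(\Gamma)$.

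The step needing the most care is checking that the backward reduction $\Gamma\mapsto H^{(s)}(\Gamma)$ runs in polynomial time. This hinges on $s$ being a fixed parameter rather than part of the input: the hyperedges of $H^{(s)}(\Gamma)$ are the faces of $\Gamma$ of size $s+1$, and these can be enumerated from the input presentation of $\Gamma$ in time polynomial in its size, since a facet of size $m$ contributes at most $\binom{m}{s+1}=O(m^{s+1})$ such faces. I expect this bookkeeping, together with the translation of the exceptional pairs under $t=s+1$ and the observation that the exception $t=1$ never occurs because $s\ge1$, to be the only delicate points; once the polynomial equivalence is established, the classification reads off directly from the hypergraph result.
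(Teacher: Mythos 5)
Your proposal is correct and takes essentially the same route as the paper: the paper also obtains $\NP$-hardness by pushing Lov\'asz's classification of $(k,s+1)$-$\textsc{colorable}$ through the polynomial-time reduction $H \mapsto \Gamma(H)$, notes $\NP$ membership is immediate, and handles the tractable cases directly ($k=1$ amounts to determining the dimension of $\Gamma$, and $s=1$ is ordinary graph coloring, so $(2,1)$ is bipartiteness). Your extra backward reduction $\Gamma \mapsto H^{(s)}(\Gamma)$ and the explicit bookkeeping of how the exceptional pairs translate under $t=s+1$ are elaborations of details the paper leaves implicit, not a different argument.
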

\begin{proof}
We certainly have the $(k,s)$-$\textsc{simplicial colorable} \in \NP$ for any $(k,s)$.
Note the if $s = 1$ we are considering graph coloring and if $k=1$ we simply need to determine the dimension of the simplicial complex.
The $\NP$-hardness for $k \ne 1$ and $(k,s) \ne (2,1)$ follows from the polynomial time reduction from $(k,s+1)$-$\textsc{colorable}$ by $H \mapsto \Gamma(H)$.
\end{proof}

\section{Coloring and plurigraphs}
\label{sec:gen}

We define a \emph{plurigraph} $\G$ to be a pair $\G = (V, \E)$ where $V$ is a finite set and $\E$ is a multiset of nonempty graphs with vertex set $V$.
This means elements of $\E$ are of the form $(V,E)$ with $E \ne \emptyset$.
An element $(V,E) \in \E$ is called a \emph{pluriedge}, and $(V,E) \in \E$ is a \emph{pluriloop} if $E$ consists of only loops.
Observe that a pluriedge is just a graph.
We use this terminology and the notation $\E$ to emphasize that in the theory of plurigraph coloring the elements $(V,E) \in \E$ play a role analogous to the role edges play in the classical theory of graph coloring.

If $\G = (V, \E)$ is a plurigraph, then a coloring of $\G$ is a map $f:V \to \P$. 
The coloring $f:V \to \P$ is a \emph{proper coloring} of $\G$ if for each $(V,E) \in \E$ there exists an edge $e \in E$ which is not monochromatic.
Observe that no proper coloring of $\G$ exists if $\G$ contains a pluriloop.
Also observe that $f: V \to \P$ is a proper coloring of $\G$ if and only if for each $(V,E) \in \E$ there exists a connected component of $(V,E)$ which is not monochromatic.
So, plurigraph coloring depends on the components of the pluriedges.

\begin{ex}
Given a graph $G = (V, E)$ we get a plurigraph $\G_G := (V, \E_G)$ where $\E_G = \{(V, \{e\}) : e \in E\}$.
Here proper colorings of $\G_G$ exactly correspond to proper colorings of $G$.
For a hypergraph $H = (V,E)$ we get the plurigraph $\G_H := (V, \E_H)$ where $\E_H = \{(V,E_e) : e \in E\}$ and $E_e = \{uv: u,v \in e\}$.
The proper colorings of $\G_H$ are in correspondence with proper colorings of $H$.
\label{ex:graph}
\end{ex}

\begin{rem}
For a hypergraph $H = (V,E)$ the encoding of $H$ as the plurigraph $\G_H$ in Example~\ref{ex:graph} is canonical,  but it is not the most efficient.
One could define $\G'_{H} = (V, \E'_H)$ where  $\E'_H = \{(V,E'_e) : e \in E\}$ and $E'_e$ consists of the edges of any tree on the vertices of the hyperedge $e$.
We would still have the property that there is a one-to-one correspondence between proper colorings of $H$ and $\G'_H$.
In general $\G'_H$ will use many fewer edges than $\G_H$.
We will use $\G_H$ because it does not require of choice of tree for each hyperedge.
\label{rem:efficient}
\end{rem}

We have shown in Lemma~\ref{lem:equiv2} that $s$-simplicial coloring is equivalent to proper coloring $(s+1)$-uniform hypergraphs.
Therefore considering Example~\ref{ex:graph}, coloring in plurigraphs encompasses coloring in graphs, hypergraphs, and simplicial complexes.
In Section~\ref{sec:other} we will show that coloring in plurigraphs also encompasses other types on coloring like oriented coloring and acyclic coloring.

For a plurigraph $\G$ we define the \emph{chromatic nc-symmetric function} of $\G$ by
$$Y_{\G} := \sum_f \prod_{i=1}^n y_{f(i)}$$
where the sum is over all proper colorings $f$ of $\G$.
It is readily verified that $Y_{\G}$ is in fact an element of $\NCSym$.
By allowing the variables to commute we obtain the symmetric function $X_{\G}$ which we call the \emph{chromatic symmetric function} of $\G$.
We also obtain the \emph{chromatic polynomial} of $\G$, which we denote $\chi_{\G}$, by letting $\chi_{\G}(k)$ by the specialization of $Y_{\G}$ with $y_i = 1$ for $1 \leq i \leq k$ and $y_i = 0$ for $i > k$.
Such a specialization always gives arise to a polynomial~\cite[Proposition 7.8.3]{EC2}.
Here $\chi_{\G}(k)$ counts the number of proper coloring of $\G$ using only colors from $[k]$.

For a plurigraph $\G = (V, \E)$ and $(V,E) \in \E$, \emph{deletion} of the pluriedge $(V,E)$ is denoted $\G \setminus (V,E)$ and defined by
$$\G \setminus (V,E) := (V, \E')$$
where $\E' = \E \setminus \{(V,E)\}$.
For any graphs $(V,E_1)$ and $(V,E_2)$ on the same vertex set we define 
$$(V,E_1)/(V,E_2) := (V/(V,E_2), E_1/(V,E_2))$$
where $V/(V,E_2)$ and $E_1/(V,E_2)$ are obtained by identifying the vertices $u$ and $v$ whenever $uv \in E_2$.
For a plurigraph $\G = (V, \E)$ and $(V,E) \in \E$ \emph{contraction} by the pluriedge $(V,E)$ is denoted by $\G / (V,E)$ and defined by  
$$\G / (V,E) = (V/(V,E), \E'')$$
where $\E'' = \{G/(V,E) : G \in \E'\}$ and again $\E' = \E \setminus (V,E)$.
Observe that these definitions agree with the usual notion of deletion and contraction in a graph $G$ if we consider the plurigraph $\G_G$ from Example~\ref{ex:graph}.
Also, note that for a pluriloop deletion and contraction are equivalent.
Lastly, notice that both deletion and contraction always decrease the number of pluriedges by exactly 1.

Let $\G = (V, \E)$ be a plurigraph and let $(V,E) \in \E$ be a fixed pluriedge.
For any two disjoint subsets $A, B \subseteq V$ we say $A < B$ if $a < b$ for all $a \in A$ and $b \in B$.
We call the pluriedge $(V,E)$ \emph{contraction-ready} if the blocks of the partition of $V$ given by the connected components of the graph $(V,E)$ can be ordered to obtain the composition $\Phi = (B_1,B_2, \dots, B_{\ell})$ of $V$ such that $B_{i_1} > B_{i_2}$ for any $i_1 < i_2$, and there is some $k \geq 0$ such that $B_i$ a singleton if and only if $i > k$.
In this case we call $\Phi$ a contraction-ready composition.
Notice that any pluriedge can be made contraction-ready by some relabeling of the vertices.
Here if $|V| = n$ we identify $V$ with $[n]$, and relabeling the vertices amounts to acting by some permutation $\delta \in S_n$.
The permutation $\delta$ also acts on a monomial of degree $n$ by
$$\delta(y_{i_1} y_{i_2} \cdots y_{i_n}) = y_{i_{\delta^{-1}(1)}} y_{i_{\delta^{-1}(2)}} \cdots y_{i_{\delta^{-1}(n)}}$$
and can be extended linearly to act on $Y_{\G}$.
In this case we have $\delta(Y_{\G}) = Y_{\delta(\G)}$.
For the corresponding relabeling result for graphs see~\cite[Proposition 3.3]{GS}.
Thus we can assume that any plurigraph $\G$ has a contraction-ready pluriedge since we can always obtain such a pluriedge by relabeling.

If $(V,E)$ is a contraction-ready pluriedge then the contraction operation is compatible with our ordering of vertices.
When the contraction-ready composition is $\Phi = (B_1,B_2, \dots, B_{\ell})$ the $\ell$ blocks of $\Phi$ will correspond to the vertex set of the contraction $\G / (V,E)$, and the vertex set will be $[\ell]$ where the vertices in $B_i$ all get identified to a single vertex denoted by $\ell - i + 1$.
To demonstrate this consider the following example.

\begin{ex}
Consider the plurigraph $\G = ([4], \{([4], \{13,24\}), ([4],\{12,34\})\})$ and let $G = ([4],\{12,34\})$.
Then $\G \setminus G = ([4], \{([4], \{13,24\})\})$ and $\G / G = ([2], \{([2],\{12,12\})\})$.
Here the contraction-ready composition is $(34,12)$.
The vertices $3$ and $4$ are identified and denote by $2$ while the vertices $1$ and $2$ are identified and denoted by $1$ in $\G / G$.
An example of a proper coloring of $\G$ is given by $f:[4] \to \P$ by $f(1)=f(2)=f(3)=1$ and $f(4) = 2$.
The plurigraphs $\G$, $\G \setminus G$, and $\G / G$ are shown visually in Figure~\ref{fig:example}.
\label{ex:contraction}
\end{ex}

\begin{figure}
\centering
\includegraphics[scale=0.8]{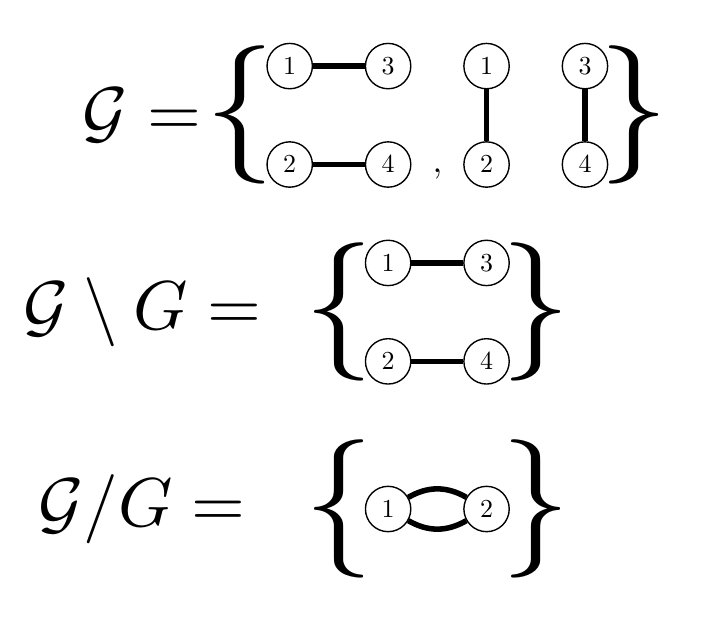}
\caption{A visual representation of $\G$, $\G \setminus G$, and $\G / G$ from Example~\ref{ex:contraction}}
\label{fig:example}
\end{figure}

We are now ready to state and prove the deletion-contraction formula for the chromatic nc-symmetric function of a plurigraph.

\begin{thm}
If $\G = (V, \E)$ and $(V,E) \in \E$ is a contraction-ready pluriedge, then
$$Y_{\G} = Y_{\G \setminus (V,E)} - Y_{\G / (V,E)} \uparrow^{(r_1,r_2,\dots,r_k)}$$
where $\Phi = (B_1,B_2, \dots, B_{\ell})$ is the contraction-ready composition of $V$ given by connected components of $(V,E)$ and $r_i = |B_i| - 1$.
\label{thm:delcongraph}
\end{thm}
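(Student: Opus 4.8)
The plan is to prove the deletion-contraction formula by establishing a bijection (or rather a sign-corrected correspondence) between proper colorings of $\G$ and the colorings counted by the right-hand side. The key observation is that proper colorings of $\G$ differ from proper colorings of $\G \setminus (V,E)$ exactly by those colorings $f$ that are proper for every pluriedge \emph{except} possibly $(V,E)$, but which fail to be proper on $(V,E)$ itself. A coloring $f$ fails to be proper on $(V,E)$ precisely when $f$ is monochromatic on every connected component of $(V,E)$, i.e.\ when $f$ is constant on each block $B_i$ of the contraction-ready composition $\Phi$.

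**The main decomposition.**

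First I would write, using the definition of $Y_{\G}$ as a sum over proper colorings,
\begin{align*}
Y_{\G \setminus (V,E)} = \sum_{f} \prod_{i=1}^n y_{f(i)},
\end{align*}
where the sum ranges over colorings proper for all pluriedges in $\E' = \E \setminus \{(V,E)\}$. I would then split this sum according to whether $f$ respects $(V,E)$ or not: the colorings that are \emph{also} proper on $(V,E)$ contribute exactly $Y_{\G}$, while the remaining colorings are those proper on $\E'$ that are constant on each connected component of $(V,E)$. This gives
\begin{align*}
Y_{\G \setminus (V,E)} = Y_{\G} + \sum_{g} \prod_{i=1}^n y_{g(i)},
\end{align*}
where $g$ ranges over colorings constant on each block $B_i$ and proper on $\E'$. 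Rearranging yields $Y_{\G} = Y_{\G\setminus(V,E)} - \sum_g \prod y_{g(i)}$, so it remains to identify the correction sum with $Y_{\G/(V,E)}\uparrow^{(r_1,\dots,r_k)}$.

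**Matching the correction term with the induction.**

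This is where the contraction-ready hypothesis does its work, and I expect it to be the main obstacle. A coloring $g$ that is constant on each block of $\Phi$ descends to a coloring $\bar g$ of the contracted vertex set $[\ell]$, and by definition of contraction $g$ is proper on $\E'$ if and only if $\bar g$ is proper on $\E''$; thus the $\bar g$ range over exactly the proper colorings of $\G/(V,E)$. The monomial $\prod_{i=1}^n y_{g(i)}$ equals $\prod_{j=1}^\ell y_{\bar g(j)}^{|B_j|}$, so each variable attached to block $B_j$ is raised to an exponent one greater than $r_j = |B_j|-1$. The remaining task is purely bookkeeping: I must check that the ordering convention built into contraction-readiness — vertices in $B_i$ identified to $\ell - i + 1$, with blocks ordered so $B_{i_1} > B_{i_2}$ for $i_1 < i_2$, and singletons occupying the tail $i > k$ — matches exactly the positions and exponents produced by the induction operation $\uparrow^{(r_1,\dots,r_k)}$ from Lemma~\ref{lem:induction}. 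Concretely, the singleton blocks contribute exponent $1$ and need no induction, which is why the inducing sequence has length $k$ and uses $r_1,\dots,r_k$ rather than $r_1,\dots,r_\ell$; the nonsingleton blocks, read from the right end of the monomial, receive the powers $1+r_i$ in precisely the order the induction definition prescribes. Invoking Lemma~\ref{lem:induction} to pass from the monomial-level statement to the identity $Y_{\G/(V,E)}\uparrow^{(r_1,\dots,r_k)} = \sum_g \prod y_{g(i)}$ then completes the proof. The delicate point throughout is verifying that the reversed labeling $B_i \mapsto \ell-i+1$ and the tail-of-singletons structure align the exponents correctly with the recursive shifts in the definition of $\uparrow$, so I would carry out one small explicit check (for instance revisiting Example~\ref{ex:contraction}) to confirm the indexing before asserting the general match.
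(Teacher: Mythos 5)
Your proposal is correct and takes essentially the same route as the paper's proof: both decompose the proper colorings of $\G \setminus (V,E)$ into proper colorings of $\G$ together with colorings that are monochromatic on each connected component of $(V,E)$, and then identify the monomial sum over the latter with $Y_{\G / (V,E)} \uparrow^{(r_1,r_2,\dots,r_k)}$ using the contraction-ready hypothesis. If anything, your write-up makes explicit the block-to-exponent bookkeeping (blocks $B_j$ contributing exponents $1+r_j$, singletons needing no induction) that the paper compresses into the single assertion that the identity holds ``by the definition of contraction, induction, and the fact that $(V,E)$ was contraction-ready.''
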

\begin{proof}
Given $f:V \to \P$ observe that $f$ being monochromatic on all edges $e \in E$ is equivalent to being $f$ monochromatic on each connected component of $(V,E)$.
Now we have
$$Y_{\G \setminus (V,E)} =\sum_f \prod_{i=1}^n y_{f(i)} $$
where the sum is over proper colorings of $\G \setminus (V,E)$.
Any proper coloring of $\G \setminus (V,E)$ is of one of the following two types.
\begin{itemize}
\item A proper coloring of $\G$.
\item A proper coloring of $\G \setminus (V,E)$ which is monochromatic on each connected component of $(V,E)$.
\end{itemize}
Then by the definition on contraction, induction, and the fact the $(V,E)$ was contraction-ready we have that
$$Y_{\G / (V,E)} \uparrow^{(r_1,r_2,\dots,r_k)} = \sum_f \prod_{i=1}^n y_{f(i)}$$
where the sum is over $f:V \to \P$ such that $f$ is a proper coloring of $\G \setminus (V,E)$ which is monochromatic on each component of $(V,E)$.
The theorem readily follows.
\end{proof}

\begin{rem}
We make note here of a few special cases that are covered by Theorem~\ref{thm:delcongraph} and its proof, but that we think are worth explicitly mentioning.
First, if $(V,E) \in \E$ occurs more than once in the multiset $\E$, then $Y_{\G} = Y_{\G \setminus (V,E)}$ and $Y_{\G / (V,E)} = 0$ since $\G / (V,E)$ will contain a pluriloop.
Hence we see the theorem is true and the proof works the same as there are no proper colorings of $\G \setminus (V,E)$ which are monochromatic on each connected component of $(V,E)$.
Second if $(V,E) \in \E$ is a pluriloop, then $Y_{\G} = 0$ and $Y_{\G \setminus (V,E)} = Y_{\G / (V,E)}$.
Here induction need not be performed since all blocks in the contraction-ready composition will be singletons.
Also there are no colorings of either type mentioned in the proof.
So, again we see the theorem and proof are valid in the special case.
\end{rem}

Our deletion-contraction law in Theorem~\ref{thm:delcongraph} generalizes the deletion-contraction law in~\cite[Proposition 3.5]{GS}, and in a similar way we can use our deletion-contract law to give a powersum expansion~\cite[Theorem 3.6]{GS} of the chromatic nc-symmetric function $Y_{\G}$.
Given a plurigraph $\G = (V, \E)$ and $A \subseteq \E$ with $A = \{(V,E_i) : 1 \leq i \leq k\}$ we define $\pi(A)$ to be the partition of $V$ into the connected components of the graph $(V,\bigcup_{i=1}^k E_i)$.
Recall that the definition of the powersum nc-symmetric functions is given in Equation~(\ref{eq:powersum}).

\begin{thm}
If $\G = (V, \E)$ is a plurigraph, then
$$Y_{\G} = \sum_{A \subseteq \E}(-1)^{|A|}p_{\pi(A)}.$$
\label{thm:pow}
\end{thm}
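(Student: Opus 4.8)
The plan is to prove this by induction on the number of pluriedges $|\E|$, using the deletion-contraction law from Theorem~\ref{thm:delcongraph} as the inductive step. The base case $\E = \emptyset$ is immediate: every coloring $f:V\to\P$ is proper, so $Y_\G = p_{0/V} = p_{\hat 0}$ where $\hat 0$ is the partition of $V$ into singletons; on the right-hand side the only subset is $A = \emptyset$, giving $(-1)^0 p_{\pi(\emptyset)} = p_{\hat 0}$, since the graph $(V,\emptyset)$ has every vertex as its own component. These agree.

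For the inductive step I would fix a contraction-ready pluriedge $(V,E)\in\E$ (which exists after relabeling, as discussed before Theorem~\ref{thm:delcongraph}) and apply
\[
Y_{\G} = Y_{\G \setminus (V,E)} - Y_{\G / (V,E)} \uparrow^{(r_1,\dots,r_k)}.
\]
Both $\G\setminus(V,E)$ and $\G/(V,E)$ have one fewer pluriedge, so the induction hypothesis expresses each as an alternating sum of powersums. The deletion term $Y_{\G\setminus(V,E)}$ expands as $\sum_{A\subseteq\E\setminus\{(V,E)\}}(-1)^{|A|}p_{\pi(A)}$, which is exactly the part of the target sum ranging over subsets $A$ that \emph{do not} contain $(V,E)$. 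The whole game is therefore to show that the contraction term $Y_{\G/(V,E)}\uparrow^{(r_1,\dots,r_k)}$, after applying induction, reproduces $-\sum_{(V,E)\in A}(-1)^{|A|}p_{\pi(A)} = \sum_{(V,E)\in A}(-1)^{|A|-1}p_{\pi(A)}$, i.e.\ the part of the sum over subsets that \emph{do} contain $(V,E)$.

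The key combinatorial identity I would need is that for a subset $A'\subseteq\E\setminus\{(V,E)\}$ and its extension $A = A'\cup\{(V,E)\}$, the induction operation sends the contracted partition to the right partition: namely $\pi_{\G/(V,E)}(A')\uparrow^{(r_1,\dots,r_k)} = \pi(A)$, together with the compatibility $p_{\pi}\uparrow^{(r_1,\dots,r_k)} = p_{\pi\uparrow^{(r_1,\dots,r_k)}}$ recorded after Lemma~\ref{lem:induction}. Here $\pi_{\G/(V,E)}(A')$ is the partition of the contracted vertex set $V/(V,E)$ into components of $\bigcup_{(V,E_i)\in A'}(E_i/(V,E))$, and the claim is that ``un-contracting'' (re-expanding each vertex of $V/(V,E)$ back into its fiber $B_i$ and merging those fibers into whatever block they landed in) coincides with forming the components of $\bigcup_{(V,E_i)\in A}E_i = E\cup\bigcup_{A'}E_i$ on the original $V$.

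\textbf{The main obstacle} I anticipate is verifying this partition-matching identity carefully, because it hinges on the precise bookkeeping of the contraction-ready ordering: the composition $\Phi=(B_1,\dots,B_\ell)$ was set up so that the components of $(V,E)$ become the vertices $\ell, \ell-1, \dots, 1$ of the contraction, with $r_i = |B_i|-1$ encoding how many extra vertices each re-expands into, and the induction $\uparrow^{(r_1,\dots,r_k)}$ reinserts exactly those vertices into the same block. I would need to check that merging via $E$ (which glues each $B_i$ into a single contracted vertex) followed by merging via the $A'$-edges on the quotient gives the same partition as merging via $E$ and all the $A'$-edges simultaneously on $V$ — this is essentially the transitivity of ``being joined by a path,'' but matching it to the formal induction operator $\uparrow$ requires tracking the relabeling and the singleton/non-singleton split at index $k$ precisely. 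Once this identity is in hand, summing over all $A'\subseteq\E\setminus\{(V,E)\}$ and combining with the deletion term assembles the full sum over all $A\subseteq\E$, completing the induction.
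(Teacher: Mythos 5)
Your proposal follows essentially the same route as the paper's own proof: induction on $|\E|$, with the deletion--contraction law of Theorem~\ref{thm:delcongraph} as the inductive step, the deletion term accounting for subsets $A$ not containing $(V,E)$, and the contraction term matched to the subsets containing $(V,E)$ via the compatibility of induction on powersums with induction on partitions and the identity $\pi(\bar{A})\uparrow^{(r_1,\dots,r_k)} = \pi(A \cup \{(V,E)\})$. The partition-matching identity you flag as the main obstacle is precisely the step the paper itself settles in one line by invoking the canonical bijection between $\E' = \E \setminus \{(V,E)\}$ and $\E'' = \E'/(V,E)$, so your plan is correct and complete to the same degree of rigor as the published argument.
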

\begin{proof}
We induct on $|\E|$.
If $|\E| = 0$, then $Y_{\G} = p_{1/2/\cdots/n} = p_{\pi(\emptyset)}$ and the theorem holds.
Now assume $|\E| > 0$ and let $(V,E) \in \E$ be a contraction-ready pluriedge.
As before we let $\E'$ and $\E''$ denote the multiset of pluriedges in the deletion and contraction respectively.
Then using Theorem~\ref{thm:delcongraph} we have
\begin{align*}
Y_{\G} &=  Y_{\G \setminus (V,E)} - Y_{\G / (V,E)} \uparrow^{(r_1,r_2,\dots,r_k)}\\
&= \sum_{A \subseteq \E'} (-1)^{|A|} p_{\pi(A)}  - \sum_{\bar{A} \subseteq \E''} (-1)^{|\bar{A}|} p_{\pi(\bar{A})}\uparrow^{(r_1,r_2,\dots,r_k)}\\
&= \sum_{\substack{A \subseteq \E \\ (V,E) \not\in A}} (-1)^{|A|} p_{\pi(A)}  + \sum_{\substack{A \subseteq \E\\ (V,E) \in A}} (-1)^{|A|} p_{\pi(A)}\\
&= \sum_{A \subseteq \E}(-1)^{|A|}p_{\pi(A)}.
\end{align*}
We note that
$$- \sum_{\bar{A} \subseteq \E''}(-1)^{|\bar{A}|} p_{\pi(\bar{A})}\uparrow^{(r_1,r_2,\dots,r_k)} = \sum_{\substack{B \subseteq \E\\ (V,E) \in B}} (-1)^{|B|}p_{\pi(B)}$$
since for $\bar{A} \subseteq \E''$ we have $\pi(\bar{A}) \uparrow^{(r_1,r_2,\dots,r_k)} = \pi(B)$ where $B = A \cup (V,E)$ if $A \in \E'$ corresponds to $\bar{A} \in \E''$.
Recall $\E'' = \E' / (V,E)$ and hence we have a canonical bijection between $\E'$ and $\E''$.
\end{proof}

Allowing the variables to commute we obtain a powersum expansion of the chromatic symmetric function $X_{\G}$ of plurigraph $\G$ from Theorem~\ref{thm:pow}.
When considering graphs and hypergraphs as plurigraphs as in Example~\ref{ex:graph} we obtain a powersum expansion for graphs~\cite[Theorem 2.5]{s:sfgcpg} and hypergraphs~\cite[Theorem 3.4]{s:hyper}.
Also, Lemma~\ref{lem:equiv2} will allow Theorem~\ref{thm:pow} to be applied in the setting of simplicial complexes where we get a powersum expansion for the $s$-chromatic symmetric function from~\cite[Equation (4)]{BHM}.
We now conclude this section with an example of using both Theorem~\ref{thm:delcongraph} and Theorem~\ref{thm:pow} to compute a chromatic nc-symmetric function.

\begin{ex}
Again consider the plurigraph $\G = ([4], \{([4], \{13,24\}), ([4],\{12,34\})\})$ and let $G = ([4],\{12,34\})$.
Recall that $\G \setminus G = ([4], \{([4], \{13,24\})\})$ and $\G / G = ([2], \{([2],\{12,12\})\})$.
Note $f:[4] \to \P$ is a proper coloring of $\G$  except in the following cases:
\begin{itemize}
\item $f(1) = f(2)$ and $f(3) = f(4)$
\item $f(1) = f(3)$ and $f(2) = f(4).$
\end{itemize}
It follows that
$$Y_{\G}  = \sum_{\substack{\pi \ne 1234\\ \pi \ne 12/34 \\ \pi \ne 13/24}} m_{\pi}.$$
Similarly by considering proper colorings of $\G \setminus G$ and $\G / G$ we can conclude
\begin{align*}
Y_{\G \setminus G} &= \sum_{\substack{\pi \ne 1234\\ \pi \ne 13/24}} m_{\pi} & Y_{\G / G} &= m_{1/2}  &Y_{\G /G} \uparrow^{(1,1)} = m_{12/34}.
\end{align*}
We can now directly verify Theorem~\ref{thm:delcongraph} in this case which states $Y_{\G} = Y_{\G \setminus G} - Y_{\G /G} \uparrow^{(1,1)}$.
We can alternatively use Theorem~\ref{thm:pow} to compute and obtain
$$Y_{\G} = p_{1/2/3/4} -  p_{12/34} - p_{13/24} + p_{1234}.$$
It is readily verified that the expansions of $Y_{\G}$ in the monomial basis and powersum basis describe the same element of \NCSym.
\label{ex:csf}
\end{ex}

\section{Graph-like scheduling problems}
\label{sec:sched}

Though the definition of scheduling problems only includes atomic formulas with weak inequalities we can build strict inequality, equality, and nonequality as follows
\begin{align*} 
(x_i < x_j) &= \neg(x_j \leq x_i) & (x_i = x_j) &= (x_i \leq x_j) \wedge (x_j \leq x_i) & (x_i \neq x_j) &= \neg(x_i = x_j).
\end{align*}
A boolean formula $C$ is called \emph{edge-like} if it can be expressed as a disjunction of nonequalities.
That is, $C$ is edge-like if
$$C = \bigvee_{(i,j) \in I} (x_i \ne x_j)$$
for some $I \subseteq [n] \times [n]$.
A boolean formula $S$ is called \emph{graph-like} if it can be expressed as a conjunction of edge-like boolean formulas.
That is, $S$ is graph-like if
$$S = \bigwedge_{\alpha \in I} C_\alpha$$
for some finite index set $I$ where $C_{\alpha}$ is edge-like for each $\alpha \in I$.

\begin{ex}
For a plurigraph $\G = (V, \E)$ define the scheduling problem $S_{\G}$ by
$$S_{\G} := \bigwedge_{(V,E) \in \E} \bigvee_{uv \in E} (x_u \ne x_v).$$
Here $S_{\G}$ is the scheduling problem of properly coloring $\G$ where $x_v$ represents the color given to $v \in V$.
The scheduling problem $S_{\G}$ is a graph-like scheduling problem.
\label{ex:gsched}
\end{ex}

Our next result shows that any graph-like scheduling problem is equivalent to properly coloring some plurigraph, and hence for any graph-like scheduling problem the scheduling nc-quasisymmetric function is the chromatic nc-symmetric function for some plurigraph.
In particular $\S_S$ lies in $\NCSym$ whenever $S$ is graph-like.
The fact $\S_S$ is nc-symmetric when $S$ is graph-like is not surprising.
It follows immediately from the fact that $(x_i \ne x_j)$ is symmetric in $i$ and $j$.

\begin{thm}
A scheduling problem $S$ is graph-like if and only if $S = S_{\G}$ for some plurigraph $\G$. 
\label{thm:gen}
\end{thm}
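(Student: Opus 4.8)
The plan is to prove the two implications separately; the forward one is immediate and the reverse one requires an explicit construction. For the direction asserting that $S = S_\G$ forces $S$ to be graph-like, I would simply invoke Example~\ref{ex:gsched}. By definition $S_\G = \bigwedge_{(V,E) \in \E} \bigvee_{uv \in E}(x_u \ne x_v)$, and each inner disjunction is a disjunction of nonequalities, hence edge-like; displaying $S_\G$ this way exhibits it as a conjunction of edge-like formulas, which is exactly the definition of graph-like.

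For the converse, suppose $S$ is graph-like, so $S = \bigwedge_{\alpha \in I} C_\alpha$ with each $C_\alpha = \bigvee_{(i,j) \in I_\alpha}(x_i \ne x_j)$ edge-like, $I_\alpha \subseteq [n]\times[n]$. The idea is to translate the boolean structure into plurigraph structure term by term: a nonequality $(x_i \ne x_j)$ becomes an edge $ij$, an edge-like disjunction becomes a pluriedge, and the outer conjunction becomes the multiset $\E$. Concretely, for each $\alpha$ with $I_\alpha \ne \emptyset$ I would place the pluriedge $([n], E_\alpha)$ into $\E$, where $E_\alpha = \{ij : (i,j) \in I_\alpha\}$. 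With this dictionary in place, checking $S_\G \equiv S$ is a direct comparison: the proper-coloring condition for the pluriedge $([n], E_\alpha)$, namely that some $uv \in E_\alpha$ be non-monochromatic, reads as $\bigvee_{uv \in E_\alpha}(x_u \ne x_v) = C_\alpha$, and conjoining over all $\alpha$ recovers $S$.

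The only place needing care, and what I expect to be the main (if minor) obstacle, is the degenerate cases forced by the requirement that every pluriedge have a nonempty edge set. If some $I_\alpha = \emptyset$ then $C_\alpha$ is an empty disjunction and hence false, so $S$ is unsatisfiable; I would handle this by placing a pluriloop such as $([n], \{11\})$ into $\E$, since a pluriloop admits no proper coloring and thus makes $S_\G$ unsatisfiable as well. I should likewise verify that self-loops are harmless in the nondegenerate case: a pair $(i,i) \in I_\alpha$ contributes the always-false atom $(x_i \ne x_i)$ on the boolean side and a loop $ii$ on the graph side, and a loop is never non-monochromatic, so the two sides still agree term by term. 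Throughout, ``$=$'' is to be read as logical equivalence, i.e.\ equality of solution sets, which is the only sense relevant to $\S_S$.
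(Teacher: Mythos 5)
Your proof is correct and takes essentially the same route as the paper's: the forward direction is disposed of by citing Example~\ref{ex:gsched}, and the converse uses the identical dictionary $C_\alpha \mapsto ([n],E_\alpha)$ with $E_\alpha = \{ij : (i,j) \in I_\alpha\}$, exactly as in the paper. Your explicit treatment of the degenerate cases (an empty clause handled by a pluriloop, and self-loops $(i,i)$ matching always-false atoms) is a small refinement the paper's proof passes over silently, and it is worth noting since pluriedges are required to have nonempty edge sets.
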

\begin{proof}
Let $S$ be a graph-like scheduling problem on $n$ elements where $S = C_1 \wedge C_2 \wedge \cdots \wedge C_m$ with each $C_i$ edge-like, where
$$C_i = \bigvee_{(j,k) \in I_i} (x_j \ne x_k)$$
for some $I_i \subseteq [n] \times [n]$.
We define the plurigraph $\G = ([n], \E)$ where 
$$\E = \{([n], E_1), ([n], E_2), \dots, ([n], E_m)\}$$
 and 
$$E_i = \{jk : (j,k) \in I_i\}.$$
We then have $S = S_{\G}$.
The reverse direction is shown in Example~\ref{ex:gsched}.
\end{proof}

Theorem~\ref{thm:gen} has the following immediate corollary.

\begin{cor}
If $S$ is a graph-like scheduling problem, then $\S_S = Y_{\G}$ for some plurigraph $\G$.
\label{cor:gen}
\end{cor}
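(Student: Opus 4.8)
The plan is to read this off directly from Theorem~\ref{thm:gen} together with the definitions of the two generating functions, since the corollary is essentially a bookkeeping statement about which functions $f:V \to \P$ are being summed over. The key point is that the scheduling nc-quasisymmetric function $\S_S$ and the chromatic nc-symmetric function $Y_{\G}$ are defined by formally identical sums, $\sum_f \prod_{i=1}^n y_{f(i)}$, so once I show the two index sets of functions coincide, the two series are literally equal.

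First I would invoke Theorem~\ref{thm:gen}: because $S$ is graph-like, there is a plurigraph $\G = (V, \E)$ with $S = S_{\G}$, where $S_{\G}$ is the scheduling problem built in Example~\ref{ex:gsched}. This reduces the statement to showing $\S_{S_{\G}} = Y_{\G}$. Next I would unpack what it means for $f:V \to \P$ to solve $S_{\G}$. By the definition in Example~\ref{ex:gsched},
$$S_{\G} = \bigwedge_{(V,E) \in \E} \bigvee_{uv \in E} (x_u \ne x_v),$$
so setting $x_v = f(v)$, the formula $S_{\G}$ is true under $f$ precisely when for every pluriedge $(V,E) \in \E$ there is some edge $uv \in E$ with $f(u) \ne f(v)$. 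That is exactly the defining condition for $f$ to be a proper coloring of $\G$. Hence the set of solutions of $S_{\G}$ equals the set of proper colorings of $\G$.

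Finally I would compare the two sums. We have $\S_{S_{\G}} = \sum_f \prod_{i=1}^n y_{f(i)}$ summed over all solutions $f$ of $S_{\G}$, while $Y_{\G} = \sum_f \prod_{i=1}^n y_{f(i)}$ summed over all proper colorings $f$ of $\G$. Since these two families of maps coincide by the previous step, the two sums agree term by term, so $\S_S = \S_{S_{\G}} = Y_{\G}$, as required. There is no real obstacle here: the only thing to verify is the translation between the boolean condition defining $S_{\G}$ and the combinatorial definition of a proper coloring, and that verification is precisely the content of Example~\ref{ex:gsched}, which I am entitled to cite.
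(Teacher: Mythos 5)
Your proposal is correct and matches the paper's approach: the paper derives this as an immediate consequence of Theorem~\ref{thm:gen}, with the identification of solutions of $S_{\G}$ with proper colorings of $\G$ (as in Example~\ref{ex:gsched}) left implicit, which is exactly the translation you spell out. Your write-up just makes explicit the bookkeeping the paper takes for granted.
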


Let $S$ be any scheduling problem with some fixed expression and let $V(S)$ be the set of $i$ such that $x_i$ appears in $S$.
If $C$ is edge-like, then $C = \bigvee_{(i,j) \in I} (x_i \ne x_j)$ for some $I \subseteq [n] \times [n]$.
Thus $\neg C = \bigwedge_{(i,j) \in I} (x_i = x_j)$, and we say that $i \sim_C j$ if $(x_i = x_j)$ appears in $\neg C$.
We get an equivalence relation on $V(C)$ by taking the reflexive transitive closure of $\sim_C$.
We will write $V(C) = O_1 \uplus O_2 \uplus \cdots \uplus O_k$ to denote the decomposition of $V(C)$ into $\sim_C$ equivalence classes.

We define a contraction operation on scheduling problems.
Let $S$ be a boolean formula over the atomic formulas $(x_i \leq x_j)$ for $i,j \in [n]$.
Given positive integers $r$ and $t$ with $r+t < n$ we define $S\downarrow_{r,t}$ to be the boolean formula over atomic formulas $(x_i \leq x_j)$ for $i,j \in [n-r]$ obtained from $S$ where $x_{n-t}, x_{n-t-1}, \dots, x_{n-t-r}$ are all identified to $x_{n-r-t}$ and then the variable indices are standardized to lie in $[n-r]$.
For $(r_1, r_2, \dots, r_k)$ a sequence of positive integers with $r = \sum_{i=1}^k r_i$ and $r+k < n$.
We define $S\downarrow_{(r_1, r_2, \dots, r_k)} := S\downarrow_{(r_1, r_2, \dots, r_k), 0}$ where we have the recursion $S\downarrow_{(r_1, r_2, \dots, r_j), t} = \left(S\downarrow_{r_1,t}\right)\downarrow_{(r_2, r_3, \dots, r_j), t+1}$ with $S\downarrow_{(),t} = S$.
We now give an example of contraction of a scheduling problem and set composition.

\begin{ex}
\begin{align*}
&\left(((x_1 \leq x_2) \wedge (x_1 < x_3)) \vee ((x_3 \neq x_4) \wedge (x_4 \leq x_5))\right)\downarrow_{(2,1)}\\
= &\left(((x_1 \leq x_2) \wedge (x_1 < x_3)) \vee ((x_3 \neq x_4) \wedge (x_4 \leq x_5))\right)\downarrow_{(2,1),0}\\
= &\left(((x_1 \leq x_2) \wedge (x_1 < x_3)) \vee ((x_3 \neq x_3) \wedge (x_3 \leq x_3))\right)\downarrow_{(1),1}\\
= &((x_1 \leq x_1) \wedge (x_1 < x_2)) \vee ((x_2 \neq x_2) \wedge (x_2 \leq x_2))\\
= &(x_1 < x_2)
\end{align*}
\end{ex}

\begin{lem}
If $S = S' \wedge C$ is scheduling problem, then $\S_S = \S_{S'} - \S_{S' \wedge \neg C}$.
\label{lem:neg}
\end{lem}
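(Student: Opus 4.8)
The plan is to argue directly from the definition of the scheduling nc-quasisymmetric function as a sum over solutions, reducing the identity to a disjoint-union decomposition of solution sets. First I would observe that, by definition, a function $f:[n]\to\P$ is a solution to $S = S' \wedge C$ precisely when $f$ is a solution to $S'$ and, upon setting $x_i = f(i)$, the boolean formula $C$ evaluates to true. Thus the solution set of $S$ is exactly the set of solutions to $S'$ that additionally satisfy $C$.

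Next I would use the fact that, for any fixed assignment $x_i = f(i)$, the boolean formula $C$ is either true or false, and it is false precisely when $\neg C$ is true. Hence every solution $f$ of $S'$ falls into exactly one of two classes: those for which $C$ holds and those for which $\neg C$ holds. This yields a disjoint decomposition
$$\{f : f \text{ solves } S'\} = \{f : f \text{ solves } S' \wedge C\} \uplus \{f : f \text{ solves } S' \wedge \neg C\}.$$

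Since $\S_{S'} = \sum_f \prod_{i=1}^n y_{f(i)}$ is a sum indexed by the solutions of $S'$, splitting this sum according to the above partition gives $\S_{S'} = \S_{S' \wedge C} + \S_{S' \wedge \neg C}$. Because $S = S' \wedge C$, the first term on the right is $\S_S$, and rearranging yields $\S_S = \S_{S'} - \S_{S' \wedge \neg C}$, as desired.

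I do not expect any substantial obstacle: the argument is purely a bookkeeping of solution sets, and the only point requiring care is the observation that $C$ and $\neg C$ partition the solutions of $S'$, which is just the law of excluded middle applied to each assignment. In particular no properties of the ordering on $\P$ or of the nc-quasisymmetric structure are needed, since the identity holds already at the level of the underlying sets of monomials.
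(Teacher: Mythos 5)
Your proof is correct and follows essentially the same route as the paper's: both partition the solution set of $S'$ into those solutions satisfying $C$ and those satisfying $\neg C$, split the defining sum of $\S_{S'}$ accordingly, and rearrange. The only cosmetic difference is that you index the sum by solution functions $f:[n]\to\P$ while the paper phrases it in terms of set compositions $\Phi$ solving the scheduling problem; these are equivalent formulations of the same decomposition.
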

\begin{proof}
First note that
$$\{\Phi : \Phi \mathrm{\;solves\;} S'\} = \{\Phi : \Phi \mathrm{\;solves\;} S' \wedge C\} \uplus \{\Phi : \Phi \mathrm{\;solves\;} S' \wedge \neg C\}.$$
The scheduling nc-quasisymmetric function is just a sum of monomial nc-quasisymmetric functions for set compositions solving the scheduling problem.
It follows that
$$\S_{S'} = \S_{S' \wedge C} + \S_{S' \wedge \neg C} = \S_{S} + \S_{S' \wedge \neg C},$$
and by arranging this equation we obtain $\S_S = \S_{S'} - \S_{S' \wedge \neg C}$ as desired.
\end{proof}

Recall for two disjoint subsets $A, B \subseteq [n]$ we say $A < B$ if $a < b$ for all $a \in A$ and $b \in B$.
Given a scheduling problem $S$ on $n$ elements with $S = S' \wedge C$ we call $C$ a \emph{contractible clause} if:
\begin{itemize}
\item $C$ is edge-like,
\item $V(C) = \{x_{n-s}, x_{n-s+1}, \dots, x_n\}$ for some $0 < s \leq n$,
\item There exists an ordering so that $O_i > O_j$ for $i < j$ where 
$$V(C) = O_1 \uplus O_2 \uplus \cdots \uplus O_k$$
and the $O_i$ are the $\sim_C$ equivalence classes.
\end{itemize}
Note that any edge-like clause $C$ can be made into a contractible clause by relabeling the variables $x_i$ if needed.
The relabeling works similarly to the relabeling for plurigraphs in the previous section.

The contraction operation for scheduling problems define earlier operates on the variables $x_1, x_2, \dots, x_n$.
We could have also defined contraction to operate on the set of indices $[n]$.
Viewing contraction as operating on $[n]$, contraction can be applied to set compositions of $[n]$ provided that all elements being identified are in the same block.
Before proving the next lemma we give an example of when contraction can and cannot be applied to set compositions.
We give this example instead of a formal definition of contraction for set compositions because (when it is defined) contraction for set compositions has the same rule as contraction for scheduling problems.

\begin{ex}
Let us consider set compositions of $[5]$ and take the sequence of positive integers $(2,1)$.
For the set composition $(125, 34)$, contraction with respect to the sequence $(2,1)$ is not defined.
To perform this contraction we would first identify the elements $3$, $4$, and $5$.
This would not lead to a valid set composition of $[3]$ as the elements $3$, $4$, and $5$ occupy more than one block.
However, we can perform contraction with respect to the sequence $(2,1)$ on the set composition $(12,345)$ to obtain
\begin{align*}
(12,345)\downarrow_{(2,1)} &= (12,345)\downarrow_{(2,1),0}\\
&= (12,3)\downarrow_{(1),1}\\
&= (1,2).
\end{align*}
\end{ex}

\begin{lem}
If $S = S' \wedge C$ is a scheduling problem and $C$ is a contractible clause, then
$$\S_{S' \wedge \neg C} = \left(\S_{S'\downarrow_{(r_1,r_2,\dots,r_k)}}\right)\uparrow^{(r_1,r_2,\dots,r_k)}$$
where $V(C) = O_1 \uplus O_2 \uplus \cdots \uplus O_k$ is the decomposition into $\sim_C$ equivalence classes and $r_i = |O_i| - 1$.
\label{lem:edge}
\end{lem}
\begin{proof}
We must show that
$$\{\Phi : \Phi \mathrm{\;solves\;} S' \wedge \neg C\} = \{\Phi\uparrow^{(r_1,r_2,\dots,r_k)} : \Phi \mathrm{\;solves\;} S'\downarrow_{(r_1,r_2,\dots,r_k)}\}.$$
Note that $\Phi$ solving $\neg C$ exactly means that $i$ and $j$ are  in the same block whenever $x_i \sim_C x_j$.
So, $\Phi$ solving $S' \wedge \neg C$ is equivalent to $\Phi$ solving $S'$ with $i$ and $j$ in the same block of $\Phi$ whenever $x_i \sim_C x_j$.

For any $\Phi$ solving $S'\downarrow_{(r_1,r_2,\dots,r_k)}$ we will have $i$ and $j$ in the same block of $\Phi\uparrow^{(r_1,r_2,\dots,r_k)}$ whenever $x_i \sim_C x_j$.
This follows immediately from the definition of induction and the fact that $C$ is a contractible clause.
It remains the verify that $\Phi\uparrow^{(r_1,r_2,\dots,r_k)}$ solves $S'$ for any $\Phi$ solving $S'\downarrow_{(r_1,r_2,\dots,r_k)}$.
This also follows immediately from the definitions of induction and contraction.

Conversely, given $\Phi$ solving $S' \wedge \neg C$ we must verify that $\Phi = \Psi\uparrow^{(r_1,r_2,\dots,r_k)}$ for some $\Psi$ solving $S'\downarrow_{(r_1,r_2,\dots,r_k)}$.
If $\Phi$ solves $S' \wedge \neg C$, then in particular $\Phi$ solves $\neg C$.
Since $C$ is a contractible clause, contraction with respect to the sequence $(r_1,r_2,\dots,r_k)$ is defined for any $\Phi$ solving $\neg C$.
It follows that if $\Phi$ solves $S' \wedge \neg C$, then $\Phi = (\Phi\downarrow_{(r_1,r_2,\dots,r_k)})\uparrow^{(r_1,r_2,\dots,r_k)}$ and $\Phi\downarrow_{(r_1,r_2,\dots,r_k)}$ solves $S'\downarrow_{(r_1,r_2,\dots,r_k)}$.
\end{proof}

We now give a deletion-contraction law that applies to any scheduling problem $S$ that can be expressed as $S = S' \wedge C$ where $C$ is edge-like.
The deletion-contraction law follows directly from Lemma~\ref{lem:neg} and Lemma~\ref{lem:edge}.

\begin{thm}
If $S = S' \wedge C$ if a scheduling problem and $C$ is a contractible clause, then $$\S_S = \S_{S'} -  \left(\S_{S'\downarrow_{(r_1,r_2,\dots,r_k)}}\right)\uparrow^{(r_1,r_2,\dots,r_k)}$$
where $V(C) = O_1 \uplus O_2 \uplus \cdots \uplus O_k$ is the decomposition into $\sim_C$ equivalence classes and $r_i = |O_i| - 1$.
\label{thm:delcon}
\end{thm}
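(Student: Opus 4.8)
The plan is to derive the statement directly by composing the two preceding lemmas, since all of the combinatorial content has already been isolated there. First I would invoke Lemma~\ref{lem:neg} with the given decomposition $S = S' \wedge C$: because the set compositions solving $S'$ split as a disjoint union of those solving $S' \wedge C = S$ and those solving $S' \wedge \neg C$, the monomial expansion of $\S_{S'}$ decomposes accordingly, giving
$$\S_S = \S_{S'} - \S_{S' \wedge \neg C}.$$

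Next, since $C$ is assumed to be a contractible clause, the hypotheses of Lemma~\ref{lem:edge} are met, and I would apply it to rewrite the subtracted term as
$$\S_{S' \wedge \neg C} = \left(\S_{S'\downarrow_{(r_1,r_2,\dots,r_k)}}\right)\uparrow^{(r_1,r_2,\dots,r_k)},$$
with $V(C) = O_1 \uplus \cdots \uplus O_k$ and $r_i = |O_i| - 1$ exactly as in the theorem. Substituting this expression into the identity from the first step yields the claimed formula immediately.

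The only point requiring a moment of care is to confirm that the indexing data match between the two lemmas: Lemma~\ref{lem:neg} places no constraint on $C$, while Lemma~\ref{lem:edge} requires $C$ to be contractible and fixes the sequence $(r_1,\dots,r_k)$ via the $\sim_C$ equivalence classes. Since the theorem's hypothesis already assumes $C$ is contractible and uses precisely this sequence, no additional verification is needed and there is no genuine obstacle; the two lemmas glue together verbatim. In effect this theorem is the scheduling-problem analogue of Theorem~\ref{thm:delcongraph}, with Lemma~\ref{lem:neg} playing the role of the deletion step and Lemma~\ref{lem:edge} supplying the contraction step, so the substance of the argument lies entirely in those two lemmas rather than in the present proof.
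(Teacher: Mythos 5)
Your proof is correct and matches the paper's own argument exactly: the paper states that Theorem~\ref{thm:delcon} ``follows directly from Lemma~\ref{lem:neg} and Lemma~\ref{lem:edge},'' which is precisely the composition you carried out. Your additional check that the contractibility hypothesis and the sequence $(r_1,\dots,r_k)$ line up between the two lemmas is a reasonable point of care, but as you note it introduces no new difficulty.
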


If $S = C_1 \wedge C_2 \wedge \cdots \wedge C_m$ is graph-like with each $C_i$ edge-like, then we can apply Theorem~\ref{thm:delcon} with $S' = C_1 \wedge C_2 \wedge \cdots \wedge C_{m-1}$ and $C = C_m$.
In this case both $S'$ and $S'\downarrow_{(r_1,r_2,\dots,r_k)}$ are graph-like and so Theorem~\ref{thm:delcon} can be iterated (after perhaps relabeling).
If $S$ is a graph-like scheduling problem, then by Theorem~\ref{thm:gen} we have that $S = S_{\G}$ for a plurigraph $\G$.
In that case the deletion-contraction for $\S_S$ in Theorem~\ref{thm:delcon} is the same as the deletion-contraction for $Y_{\G}$ in Theorem~\ref{thm:delcongraph}.
We now given an example of using Theorem~\ref{thm:delcon} to compute a scheduling nc-quasisymmetric function.

\begin{ex}
We let $S = S' \wedge C$ where 
\begin{align*}
S' &= (x_1 \leq x_2) \wedge (x_2 \leq x_3) \wedge (x_3 \leq x_4) &C &= (x_1 \ne x_2) \vee (x_3 \ne x_4).
\end{align*}
We then let 
$$S'' = S' \downarrow_{(1,1)} = (x_1 \leq x_1) \wedge (x_1 \leq x_2) \wedge (x_2 \leq x_2) = (x_1 \leq x_2).$$
Computing the scheduling nc-quasisymmetric functions we see
\begin{align*}
\S_{S'} &= M_{(1234)} + M_{(1,234)} + M_{(12,34)} + M_{(123,4)} +M_{(1,2,34)}\\
&\quad\quad\quad + M_{(1,23,4)} + M_{(12,3,4)} + M_{(1,2,3,4)}\\
\S_{S''} &= M_{(12)} + M_{(1,2)}\\
\S_{S''} \uparrow^{(1,1)} &= M_{(1234)} + M_{(12,34)}.
\end{align*}
We then can apply Theorem~\ref{thm:delcon}
\begin{align*}
\S_S &= \S_{S'} - \S_{S''} \uparrow^{(1,1)} \\
&=M_{(1,234)} + M_{(123,4)} +M_{(1,2,34)} + M_{(1,23,4)} + M_{(12,3,4)} + M_{(1,2,3,4)}.
\end{align*}
\end{ex}

We conclude this section with discussion of a geometric interpretation of the results in this section.
Consider $S = S' \wedge C$ a scheduling problem on $n$ elements where $C$ is a contractible edge-like clause.
Let $C$ be given by
$$C = \bigvee_{(i,j) \in I} (x_i \ne x_j)$$
for some $I \subseteq [n] \times [n]$.
Also let $V(C) = O_1 \uplus O_2 \uplus \cdots \uplus O_k$ be the decomposition into $\sim_C$ equivalence classes.
As before we set $r_i = |O_i| - 1$ and $r = \sum_{i=1}^k r_i$.
Lastly let 
$$V_i = \{(x_1, x_2, \dots, x_n) \in \R^n : x_j = x_{j'} \mathrm{\;for\;all\;} (j,j') \in O_i \times O_i\}.$$
So, $V_i$ is a subspace of $\R^n$ of codimension $r_i$ defined by the equivalence class $O_i$.

Each solution to the scheduling problem $S$ can be thought of as a positive lattice point in $\P^n \subset \R^n$.
By Lemma~\ref{lem:neg} we know that to determine the solutions to $S = S' \wedge C$ it suffices to find the solutions of $S'$ and $S' \wedge \neg C$.
Now $S'$ is a less restricted scheduling problem on $n$ elements, and so solutions to $S'$ again can be thought of as lattice points in $\P^n \subset \R^n$.
While $S' \wedge \neg C$ is a scheduling problem on $n$ elements, by Lemma~\ref{lem:edge} we can actually consider the scheduling problem $S'\downarrow_{(r_1,r_2,\dots,r_k)}$ which is a scheduling problem on $n - r$ elements.
So, solutions to $S'\downarrow_{(r_1,r_2,\dots,r_k)}$ can be represented by lattice points in $\P^{n-r} \subset \R^{n-r}$ where $\R^{n-r} \cong \bigcap_{i=1}^k V_i$.
Geometrically we see the deletion-contraction law in Theorem~\ref{thm:delcon} takes $S = S' \wedge C$ and reduces it to the easier problems $S'$ and $S'\downarrow_{(r_1,r_2,\dots,r_k)}$.
The problem $S'$ will be easier in the sense that there are less constraints, and $S'\downarrow_{(r_1,r_2,\dots,r_k)}$ is a lower dimensional problem.
Moreover, $\neg C$ is saying that the solutions occur  on the intersection of some hyperplanes, and $S'\downarrow_{(r_1,r_2,\dots,r_k)}$ is a lower dimensional problem occurring on the intersection of those hyperplanes.

\section{Distinguishing plurigraphs and hypertrees}
\label{sec:dist}
In this section we show that when restricted to a certain class of plurigraphs, the chromatic nc-symmetric function distinguishes these plurigraphs up to isomorphism.
Later we will look at the power of the chromatic symmetric function in distinguishing hypergraphs, and we will pay particular attention to the case of hypertrees where we provide an example of uniform hypertrees which are not isomorphic yet have the same chromatic symmetric function.

\subsection{Plurigraphs}

Two plurigraphs $\G_1 = (V_1, \E_1)$ and $\G_2 = (V_2, \E_2)$ are \emph{isomorphic} if there exists a bijection $\phi: V_1 \to V_2$ and a bijection $\psi: \E_1 \to \E_2$ such that $\phi$ is an isomorphism of the graphs $G$ and $\psi(G)$ for all $G \in \E_1$.
When using the constructions in Example~\ref{ex:graph} this definition of isomorphic agrees with the usual definition of isomorphic for graphs and hypergraphs.
For a fixed vertex set $V,$ let $\Graph (V)$ denote the set of all graphs with vertex set $V$.
We again allow loops and multiple edges.
We call a graph $G$ a \emph{simple} graph if $G$ contains no loops or multiple edges. 
Define a preorder $(\Graph(V), \prec)$ as follows, for any $G_1, G_2 \in \Graph(V)$ we have $G_1 \prec G_2$ if and only if every connected component of $G_1$ is contained in some connected component of $G_2$.
We call a plurigraph $\G = (V, \E)$ a \emph{simple plurigraph} if:
\begin{enumerate}
\item[(i)] For each $G \in \E$, every connected component of $G$ is a complete graph.
\item[(ii)] There does not exist $G_1, G_2 \in \E$ with $G_1 \neq G_2$ and $G_1 \prec G_2$.
\end{enumerate}

Recall that for a plurigraph $\G$ the chromatic nc-symmetric function is the scheduling nc-symmetric function for the corresponding scheduling problem $S_{\G}$ from Example~\ref{ex:gsched}.
So, for two plurigraphs $\G_1$ and $\G_2$ we have $Y_{\G_1} = Y_{\G_2}$ if and only if $\S_{S_{\G_1}} = \S_{S_{\G_2}}$ which occurs if and only if the boolean formulas $S_{\G_1}$ and $S_{\G_2}$ have exactly the same collections of solutions.
If $\G = (V, \E)$ is not a simple plurigraph because (ii) is violated and $G_1 \prec G_2$ for $G_1, G_2 \in \E$ where $G_1 \neq G_2$, then $Y_{\G} = Y_{\G \setminus G_2}$ since $S_{\G} = S_{\G \setminus G_2}$ in this case.
Notice that $f: V \to \P$ is a solution to the scheduling problem $S_{\G}$ if and only if  for each $G \in \E$ the map $f$ is not monochromatic on some component of $G$.
Thus if each component of $G_1$ is contained in some component of $G_2$ the conditions imposed by $G_2$ are redundant, and its removal will not change the scheduling problem.
Also observe that adding or removing edges in any $G \in \E$ which do not combined or break connected components will not change the scheduling problem $S_{\G}$.
We now show that the chromatic nc-symmetric function is a complete invariant if we restrict the simple plurigraphs.

\begin{prop}
If $\G$ is a simple plurigraph, then given $Y_{\G}$ we can construct $\G$.
\label{prop:YG}
\end{prop}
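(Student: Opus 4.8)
The plan is to reconstruct $\G$ directly from the collection of monomials appearing in $Y_{\G}$, exploiting the fact that $Y_{\G}$ records exactly the set of proper colorings. Recall that $f\colon V\to\P$ is a proper coloring of $\G$ precisely when, for each $G\in\E$, the map $f$ is nonmonochromatic on some connected component of $G$. Equivalently, writing a coloring as the partition $\pi_f$ of $V$ it induces (two vertices together iff they share a color), $f$ is improper exactly when there is some $G\in\E$ all of whose components are monochromatic under $f$, i.e.\ $\pi_f$ refines the partition $\pi(G)$ into connected components of $G$. First I would observe that from $Y_{\G}$ expanded in the monomial basis I can read off exactly which partitions $\pi$ of $V$ arise as $\pi_f$ for a proper coloring: the coefficient of $m_\pi$ is nonzero iff $\pi$ is the color-partition of some proper $f$. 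So $Y_{\G}$ determines the \emph{up-set} (in the refinement order $\Pi_n$) of ``good'' partitions, and dually the down-set $\mathcal{D}$ of ``bad'' partitions $\pi$ such that every coloring inducing $\pi$ is improper.

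Next I would identify the bad partitions in terms of $\E$. A partition $\pi$ is bad iff $\pi\le\pi(G)$ (refines) for some $G\in\E$. Hence $\mathcal{D}=\bigcup_{G\in\E}\{\pi:\pi\le\pi(G)\}$ is a union of principal order ideals, and its maximal elements are exactly a subset of $\{\pi(G):G\in\E\}$. The maximal bad partitions are therefore recoverable from $Y_{\G}$. The key reduction is then that for a \emph{simple} plurigraph these maximal bad partitions are in bijection with $\E$ itself, and each determines its graph $G$. Condition (ii) in the definition of simple plurigraph says precisely that no $\pi(G_1)$ refines another $\pi(G_2)$ for distinct $G_1,G_2\in\E$, because $G_1\prec G_2$ is exactly the statement that $\pi(G_1)\le\pi(G_2)$. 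Thus the partitions $\pi(G)$ for $G\in\E$ form an antichain, so they are all maximal in $\mathcal{D}$, and distinct $G$ give distinct $\pi(G)$; hence the set of maximal bad partitions is exactly $\{\pi(G):G\in\E\}$, recovering the multiset $\E$ at the level of connected-component partitions.

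It then remains to recover each graph $G$ from its partition $\pi(G)$. This is where condition (i) is used: each connected component of $G$ is a complete graph, so $G$ is determined by its partition into components—namely $G$ is the disjoint union of complete graphs on the blocks of $\pi(G)$. Therefore the map $G\mapsto\pi(G)$ is injective on graphs satisfying (i), and from $\pi(G)$ I can reconstruct $G$ uniquely. Assembling these reconstructed graphs over the antichain of maximal bad partitions reproduces $\E$, and $V=[n]$ is read off from the degree $n$ of $Y_{\G}$, so $\G=(V,\E)$ is recovered.

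The main obstacle I anticipate is the clean passage from ``set of proper colorings'' to ``antichain of maximal bad partitions,'' specifically verifying that no spurious maximal elements of $\mathcal{D}$ appear beyond the $\pi(G)$ and that simplicity rules out coincidences. The crux is the precise equivalence $G_1\prec G_2\iff\pi(G_1)\le\pi(G_2)$ together with the two simplicity conditions doing complementary jobs: condition (ii) guarantees the $\pi(G)$ form an antichain (so each is individually detectable as a maximal bad partition, with no collapsing), while condition (i) guarantees the map $G\mapsto\pi(G)$ is invertible (so the graph, not just its component structure, is pinned down). Once these two facts are isolated, the reconstruction is a direct reading-off from the monomial expansion, and the earlier remark in the text—that violations of (i) or (ii) leave $Y_{\G}$ unchanged—confirms that simplicity is exactly the hypothesis needed to make the invariant complete.
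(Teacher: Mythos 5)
Your approach is genuinely different from the paper's: you work in the monomial basis, reading off from $Y_{\G}$ which color partitions arise from proper colorings and then extracting $\E$ as the extremal ``bad'' partitions, whereas the paper works in the powersum basis via Theorem~\ref{thm:pow}, identifying $\{\pi(G) : G \in \E\}$ as the nontrivial minimal elements of $\{\pi \in \Pi_n : [p_{\pi}]Y_{\G} \neq 0\}$. Your starting observation is correct and worth isolating: since properness of $f$ depends only on the color partition $\pi_f$, one has $Y_{\G} = \sum_{\pi\ \mathrm{good}} m_{\pi}$ with $0/1$ coefficients, so $Y_{\G}$ determines exactly which partitions are good and which are bad. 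Both reconstructions then rest on the same two facts about simplicity that you correctly identify: condition (ii) is precisely the statement that the $\pi(G)$ are distinct and form an antichain (via $G_1 \prec G_2 \iff \pi(G_1) \leq \pi(G_2)$), and condition (i) is precisely what makes $G$ recoverable from $\pi(G)$ as a disjoint union of complete graphs on its blocks.

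However, you have the refinement direction reversed throughout, and as literally written your extraction rule fails. If every connected component of $G$ is monochromatic under $f$, then each block of $\pi(G)$ is contained in a color class of $f$, i.e.\ $\pi(G)$ refines $\pi_f$, so $\pi_f \geq \pi(G)$ --- not $\pi_f \leq \pi(G)$ as you state. Consequently the bad partitions form an \emph{up-set}, $\mathcal{D} = \bigcup_{G \in \E}\{\pi : \pi \geq \pi(G)\}$, a union of principal filters whose \emph{minimal} elements are the $\pi(G)$. Your recipe ``take the maximal bad partitions,'' applied to the true $\mathcal{D}$, would return only the one-block partition $12\cdots n$ (which is bad whenever $\E \neq \emptyset$) rather than $\{\pi(G) : G \in \E\}$. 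The error is self-consistent, so the repair is purely mechanical --- dualize everything (down-set to up-set, order ideal to filter, maximal to minimal) --- and with that correction the antichain property from condition (ii) makes the minimal bad partitions exactly $\{\pi(G) : G \in \E\}$ and the rest of your argument goes through.
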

\begin{proof}
Let $\G = (V, \E)$ with $V = [n]$, then by Theorem~\ref{thm:pow}
$$Y_{\G} = \sum_{A \subseteq \E}(-1)^{|A|}p_{\pi(A)}.$$
We must determine the pluriedges in $\E$.
Under the assumption that $\G$ is a simple plurigraph we see that the elements of $\E$ will exactly correspond the (nontrivial) minimal elements of $\{\pi \in \Pi_n: [p_{\pi}]Y_{\G} \neq 0\}$ where $[p_{\pi}]Y_{\G}$ denotes the coefficient of $p_{\pi}$ in $Y_{\G}$.
For each $G \in \E$ we have $A = \{G\} \subseteq \E$ and $p_{\pi(A)}$ occurs with coefficient $-1$ as this is the only way to obtain this partition when $\G$ is a simple plurigraph.
For $A \subseteq \E$ with $|A| > 1$, then $\pi(A)$ is strictly above $\pi(\{G\})$ in $\Pi_n$ for any $G \in A$ since $\G$ is a simple plurigraph.
\end{proof}

This Proposition means that the chromatic nc-symmetric function distinguishes simple graphs which was originally shown in~\cite[Proposition 8.2]{GS}.
Also, this Proposition means that the chromatic nc-symmetric function distinguishes among hypergraphs whose hyperedge sets are antichains in the boolean algebra.
An alternative phrasing of Proposition~\ref{prop:YG} would be to say that given a plurigraph $\G = (V, \E)$ we can determine the connected components of the minimal pluriedges from $Y_{\G}$.
This result can then be interpreted in the lattice of partitions.

Again let $\Pi_n$ be the lattice of partitions of $[n]$ partially ordered by refined.
This lattice has bottom element $\hat{0} = 1/2/\cdots/n.$
For $\pi_1, \pi_2 \in \Pi_n$ we denote the \emph{join} or \emph{least upper bound} of $\pi_1$ and $\pi_2$ by $\pi_1 \vee \pi_2 \in \Pi_n$.
Here there is an unfortunate case of double notation. 
However, $\vee$ is the standard accepted notation for both logical disjunction of boolean formulas as well as for the join of lattice elements.
We will use $\vee$ to denote the join for the remainder of this section.
As an example for $123/4/5, 1/234/5 \in \Pi_5$ we have
$$123/4/5 \vee 1/234/5 = 1234/5.$$

For any graph $G = ([n], E)$ we let $\pi(G) \in \Pi_n$ denote the partition of $[n]$ given by the connected components of $G$.
Then any plurigraph $\G = ([n], \E)$ determines the collection $\{\pi(G) : G \in \E\}$.
In this way the simple plurigraphs defined above are in bijective correspond with antichains in $\Pi_n$ (with the exception of the antichain $\{\hat{0}\}$).
Given any $\A \subseteq \Pi_n$ we can consider the nc-symmetric function
$$Y_{\A} = \sum_{A \subseteq \A} (-1)^{|A|} p_{\pi(A)}$$
where $\pi(A) = \bigvee_{\pi \in A} \pi$.
The nc-symmetric function $Y_{\A}$ is then the chromatic nc-symmetric function $Y_{\G}$ when $\A = \{\pi(G) : \G \in \E\}$.
As an example we can take $\A = \{13/24, 12/34\}$, and then
$$Y_{\A} = p_{1/2/3/4} -  p_{12/34} - p_{13/24} + p_{1234}$$
since $13/24 \vee 12/34 = 1234$.
Notice that $\A = \{\pi(G) : \G \in \E\}$ if $\G = (V, \E)$ is the plurigraph shown in Figure~\ref{fig:example}, and $Y_{\A} = Y_{\G}$ where $Y_{\G}$ was computed in Example~\ref{ex:csf}.

\subsection{Hypergraphs and hypertrees}

We have seen that the chromatic nc-symmetric function is a powerful invariant, and will now investigate the chromatic symmetric function.
It is an open problem, first considered in~\cite{s:sfgcpg}, to determine if the chromatic symmetric function distinguishes trees up to isomorphism.
For some partial results on this problem see~\cite{MMW,PZ}.
Russel has verified that the chromatic symmetric function distinguishes trees on $25$ or fewer vertices up to isomorphism~\cite{github}.
We will investigate the analogous question for uniform hypertrees.
We note that the chromatic symmetric function of hypertrees has also been studied by Taylor in~\cite{Tay17} where an expansion in Gessel's fundamental basis of quasisymmetric functions is given for hypertrees with prime sized hyperedges.

Throughout this section let $H = (V,E)$ be a hypergraph on $n$ vertices.
Let $E$ be a set of subsets of $V$ and assume that $|e| > 1$ for all $e \in E$.
That is, we do not allow multiple hyperedges or hyperedges of size $1$ in our hypergraphs.
A \emph{walk} of length $\ell > 0$  between $v_1 \in V$ and $v_{\ell} \in V$ is a  sequence
$$(v_1, e_1, v_2, e_2, \dots, v_{\ell}, e_{\ell}, v_{\ell + 1})$$
such that $e_i \in E$ with $v_i, v_{i+1} \in e_i$ for all $i$.
If all the vertices and hyperedges are distinct, then the walk is called a \emph{path}.
In the case all vertices and hyperedges are distinct with the exception that $v_1 = v_{\ell + 1}$ we call the walk a \emph{cycle}.
The hypergraph $H$ is \emph{connected} if for any $v, v' \in V$ there exists a path between $v$ and $v'$.
A \emph{hypertree} is a connected hypergraph with no cycles.
We call $H$ a \emph{linear hypergraph} if $|e_1 \cap e_2| \leq 1$ for all $e_1, e_1 \in E$ such that $e_1 \neq e_2$.
Notice a hypertree is necessarily linear, otherwise for distinct hyperedges $e_1, e_2 \in E$ and distinct vertices $v_1, v_2 \in e_1 \cap e_2$ there is a cycle a length 2
$$(v_1, e_1, v_2, e_2, v_1).$$

For $H$ let $(a_i)_{i=2}^n$ be the sequence defined by
$$a_i := |\{e \in E : |e| = i\}|$$
which records the number of hyperedges of each size in the hypergraph.
The \emph{hyperedge magnitude} of $H$ is defined to be the sum
$$\sum_{i=2}^n (i-1)a_i.$$
In~\cite{GK05} is it shown that a connected hypergraph on $n$ vertices is a hypertree if and only if the hyperedge magnitude is $n-1$.
We give the following lemma which extends this result and is a generalization of the corresponding well known fact for trees.

\begin{lem}
Let $H = (V, E)$ be a hypergraph on $n$ vertices, and consider the following conditions:
\begin{enumerate}
\item[(i)] $H$ is connected.
\item[(ii)] $H$ is acyclic.
\item[(iii)] $H$ has hyperedge magnitude equal $n-1$.
\end{enumerate}
Any two of the above conditions together imply the third.
Hence, to show that a hypergraph $H$ is a hypertree is suffices to prove that any two of the above conditions hold for $H$.
\end{lem}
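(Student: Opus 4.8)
The plan is to observe that the cited result of~\cite{GK05} already delivers two of the three implications, so that only one genuinely new implication needs to be proved. First I would rewrite the hyperedge magnitude in the cleaner form $\sum_{e \in E}(|e|-1)$, which is manifestly equal to $\sum_{i=2}^n (i-1)a_i$. The structural fact I would isolate is that this quantity is \emph{additive} over connected components: since every hyperedge has all of its vertices inside a single connected component of $H$ (its vertices are pairwise connected through that hyperedge), the edge set splits as a disjoint union over components, and hence the magnitude of $H$ equals the sum of the magnitudes of its connected components.

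For the two implications involving condition (i), I would simply invoke~\cite{GK05}. That result states that a connected hypergraph on $n$ vertices is acyclic (a hypertree) if and only if its hyperedge magnitude equals $n-1$. Reading this as an equivalence under the standing hypothesis (i), it gives at once that (i) and (ii) imply (iii), and that (i) and (iii) imply (ii).

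The only remaining implication, and the one requiring an argument, is that (ii) and (iii) imply (i). Here I would decompose the acyclic hypergraph $H$ into its connected components $C_1, C_2, \dots, C_c$, where $C_j$ has $m_j$ vertices and $\sum_{j=1}^c m_j = n$. Each $C_j$ is connected by definition of component and acyclic as a sub-hypergraph of the acyclic $H$, so each $C_j$ is a hypertree; applying the forward direction of~\cite{GK05} to $C_j$ shows its magnitude equals $m_j - 1$ (a degenerate single-vertex component has no hyperedges, since $|e|>1$, and contributes $0 = m_j - 1$, causing no trouble). Summing over components and using additivity, the magnitude of $H$ equals $\sum_{j=1}^c (m_j - 1) = n - c$. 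By hypothesis (iii) this equals $n-1$, forcing $c = 1$, which is precisely condition (i).

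The main obstacle is bookkeeping rather than conceptual: one must verify that the magnitude is genuinely additive over components and that each component really is a hypertree to which~\cite{GK05} applies, including the degenerate single-vertex components. Once the \cite{GK05} equivalence is in hand, the familiar graph-theoretic intuition that an acyclic structure with ``$n-1$ edges'' must be connected carries over verbatim, with $\sum_{e \in E}(|e|-1)$ playing the role of the edge count.
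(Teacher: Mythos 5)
Your proposal is correct, but your argument for the one new implication, (ii) and (iii) imply (i), takes a genuinely different route from the paper's. The paper proves it directly: it orders the hyperedges $e_1,\dots,e_m$, builds the hypergraph up one hyperedge at a time, and uses acyclicity to argue that adding $e_{i+1}$ must merge $|e_{i+1}|$ distinct components into one (otherwise a cycle would appear), so the number of components drops by exactly $|e_{i+1}|-1$ at each step; starting from $n$ isolated vertices this leaves $n-\sum_{i}(|e_i|-1)=n-(n-1)=1$ component. You instead decompose the acyclic $H$ into components $C_1,\dots,C_c$, observe that hyperedge magnitude is additive over components, apply the cited equivalence of~\cite{GK05} to each component (each being connected and acyclic, hence of magnitude $m_j-1$, with the single-vertex case checked by hand), and conclude $n-1=\sum_j(m_j-1)=n-c$, so $c=1$. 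Both arguments are sound and of comparable length. Yours is economical in that it recycles the black-box equivalence and only adds the additivity observation (which you correctly justify, including the degenerate components); the paper's is more self-contained, re-invoking~\cite{GK05} only for the two implications where it is quoted, and its incremental counting in fact establishes the slightly more general statement that an acyclic hypergraph of magnitude $M$ on $n$ vertices has exactly $n-M$ components, from which both the forward direction of~\cite{GK05} and the desired implication fall out.
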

\begin{proof}
From~\cite{GK05} we already know that (i) and (ii) together imply (iii), and also that (i) and (iii) together imply (ii).
It remains to show that (ii) and (iii) together imply (i).
Assume that $H = (V,E)$ is a acylic hypergraph on $n$ vertices with hyperedge magnitude equal $n-1$.
We order the hyperedges $E = \{e_1, e_2, \dots, e_m\}$ and let $H_i = (V, E_i)$ for where $E_i = \{e_1, e_2, \dots, e_i\}$ for $1 \leq i \leq m$.
Also let $H_0 = (V, \emptyset)$
Notice $H_i$ will be an acyclic hypergraph for $1 \leq i \leq m$.
Since each hypergraph is acyclic it follows that if $H_i$ has $c$ connected components, then $H_{i+1}$ has $c - |e_{i+1}| + 1$ connected components.
Now $H_0$ has $n$ connected components and so it follows that $H = H_m$ has $c$ connected components where
$$c = n - \sum_{i=1}^m (|e_i| - 1) = n - (n-1) = 1.$$
Here we have used the assumption that $H$ has edge magintude $n-1$.
Therefore we have shown $H$ is connected and completed the proof.
\end{proof}

For a graph $G$ or hypergraph $H$ we let $P_G(t)$ and $P_H(t)$ denote the \emph{chromatic polynomial} of $G$ and $H$ respectively.
If $G$ is a graph on $n$ vertices, then $G$ is a tree if and only if $P_G(t) = t(t-1)^{n-1}$.
There is a similar result for $s$-uniform hypertrees when we restrict to linear hypergraphs.
It is proven in~\cite[Theorem 5]{BL07} that if $H$ is a linear hypergraph on $n$ vertices, then $H$ is an $s$-uniform hypertree with $m$ hyperedges if and only if $P_H(t) = t(t^{s-1}-1)^m$.
Here we observe some similar behavior between trees and uniform hypertrees when we restrict to linear hypergraphs.
In what follows we will show some of the results on the chromatic symmetric which can be proven from trees can also be proven for uniform hypertrees.
However, we will also exhibit two $3$-uniform hypertrees which are not isomorphic yet have the same chromatic symmetric function.

We now give a formula for the chromatic  symmetric function of a hypergraph.
This formula is very close to~\cite[Theorem 3.4]{s:hyper} and can be easily obtain by letting variables commute in the powersum expansion of the chromatic nc-symmetric function of a hypergraph.
The algebra of symmetric functions has the powersum basis which is indexed by \emph{integer partitions}.
Given any set partition $\pi = B_1/B_2/ \cdots /B_{\ell}$ of $[n]$ we let $\type \pi$ be the integer partition of $n$ given by the sizes of the blocks in $\pi$.
Given $A \subseteq E$ we let $\lambda(A) = \type \pi(A)$.
The chromatic symmetric function $X_H$ has the expansion
$$X_H = \sum_{A \subseteq E} (-1)^{|A|} p_{\lambda(A)}.$$
Let $c_{\lambda(H)}$ denote the coefficient of $p_{\lambda}$ is the powersum expansion of $X_H$ so that
$$X_H = \sum_{\lambda} c_{\lambda(H)} p_{\lambda},$$
and let $c_{i}(H) = c_{(i,1,1,\dots,1)}(H)$.
Notice that $X_H$ is homogeneous of degree $|V|$ and when $H$ is $s$-uniform $-c_s(H) = |E|$.
Thus, we can always recover the number of vertices from $X_H$, and we can recover the number of hyperedges in the case of uniform hypergraphs.

Now assume that $H$ is $s$-uniform and acyclic.
For every $A \subseteq E$ the hypergraph $(V,A)$ has $n - (s-1)|A|$ connected components.
For any integer partition $\lambda$ we let $\len \lambda$ denote the length of the partition.
Thus for $s$-uniform acyclic hypergraphs 
$$\len \lambda(A) = n - (s-1)|A|$$
for any $A \subseteq E$.
It then follows
$$c_{\lambda} = (-1)^{\frac{n-k}{s-1}} |\{A \subseteq E : \lambda(A) = \lambda\}|$$
for $\lambda \vdash n$ with $\len \lambda = k$.
This implies the relation
\begin{equation*}
(-1)^{\frac{n-k}{s-1}}\sum_{\substack{\lambda \vdash n\\ \len \lambda = k}} c_{\lambda}(H) = \binom{m}{\frac{n-k}{s-1}}
\end{equation*}
where $m = |E|$.

For a vertex $v \in V$, the \emph{degree} of $v$ in $H$ is $\deg v := |\{e \in E : v \in e\}|$.
The \emph{degree sequence} of $H$ is the collection of the degrees of all vertices of $H$ arranged in weakly decreasing order.
Our next result shows that the chromatic symmetric function of a uniform hypertree determines its degree sequence.
In~\cite[Corollary 5]{MMW} it was shown that the chromatic symmetric function determines the degree sequence of a tree.

\begin{prop}
If $H$ is a uniform hypertree, then the degree sequence of $H$ can be determined from $X_H$.
\end{prop}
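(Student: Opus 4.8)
The plan is to reduce the problem to counting \emph{stars}. For a vertex $v$ write $d_v = \deg v$, and for $k \geq 0$ let $N_k := \sum_{v \in V}\binom{d_v}{k}$, the number of sets of $k$ hyperedges sharing a common vertex (a ``star'' of size $k$). Since
$$\sum_{k \geq 0} N_k\, y^k = \sum_{v \in V}(1+y)^{d_v},$$
knowing all the $N_k$ recovers the multiset $\{d_v\}$, i.e.\ the degree sequence (note $N_0 = n$). As $X_H$ is homogeneous of degree $n=|V|$ we recover $n$, the uniformity $s$ (every part of $\lambda(A)$ for $|A|=1$ equals $s$), and $m=|E|=-c_s(H)$. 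So it suffices to extract each $N_k$ from $X_H$.

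First I would record that the expansion $X_H = \sum_{A\subseteq E}(-1)^{|A|}p_{\lambda(A)}$ determines, for every $\lambda \vdash n$, the number $a_\lambda := |\{A \subseteq E : \type \pi(A) = \lambda\}|$, the sign being fixed by $|A| = (n-\len\lambda)/(s-1)$ for $s$-uniform acyclic $H$. The structural input I would use is a Helly property coming from acyclicity: \emph{a family of hyperedges of a hypertree has a common vertex if and only if it is pairwise intersecting.} Indeed two hyperedges meet in at most one vertex by linearity, and if $e_1,e_2,e_3$ were pairwise intersecting with no common vertex, the three (necessarily distinct) intersection points would close into a cycle of length $2$ or $3$, contradicting acyclicity; the general case follows by induction (the Helly property for subtrees of the host tree of $H$). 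Hence a $k$-subset is a star exactly when it is pairwise intersecting, and for $k\ge 2$ its centre is unique, so $N_k$ counts pairwise-intersecting $k$-subsets without overcounting.

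The heart of the argument is then an inclusion--exclusion isolating stars from the other connected sub-hypertrees of the same size, carried out by induction on $k$ in the spirit of the tree result~\cite[Corollary 5]{MMW}. For $k=2$ every connected pair is a star, so $N_2 = a_{(2s-1,\,1^{\,n-2s+1})}$. For $k=3$ a connected triple is either a star or a ``path'' (two overlaps, the two ends disjoint); counting the decompositions of a connected triple as (connected pair)\,$\cup$\,(adjacent hyperedge) gives $3N_3 + 2P_3$, where $P_3$ is the number of paths, while $N_3 + P_3 = a_{(3s-2,\,1^{\dots})}$, and the number of (connected pair)\,$\cup$\,(disjoint hyperedge) configurations equals $a_{(2s-1,\,s,\,1^{\dots})}$; since each connected pair leaves $m-2$ other hyperedges available, I obtain
$$N_3 = (m-2)\,N_2 - a_{(2s-1,\,s,\,1^{\dots})} - 2\,a_{(3s-2,\,1^{\dots})},$$
all terms already known. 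The inductive step should mirror this: write the hook coefficient counting connected $k$-subsets as $N_k$ plus a nonnegative combination of the numbers of non-star shapes, and eliminate those shapes by counting their decompositions into a smaller star together with one further hyperedge, matched against the ``one large plus one small component'' coefficients $a_\lambda$ and the previously determined $N_2,\dots,N_{k-1}$.

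The main obstacle is exactly this separation for general $k$: unlike $k=3$ there are many non-star connected shapes, and the number of ways to peel off a hyperedge depends on the shape, so I must show the resulting linear system is triangular with $N_k$ as its distinguished unknown. I expect the Helly rigidity to be what makes this tractable: because every pairwise-intersecting family has a \emph{unique} centre, each clique of the intersection graph of $H$ lies in a unique maximal clique, which tightly constrains how smaller stars can sit inside a larger connected sub-hypertree and keeps the peeling multiplicities under control. Making this bookkeeping precise, so the induction closes and each $N_k$ is expressed through the $a_\lambda$ and the lower $N_j$, is the step I expect to require the most care.
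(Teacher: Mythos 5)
Your reduction to the star counts $N_k=\sum_v\binom{d_v}{k}$ is sound, the Helly property you invoke does hold for hypertrees, and your computations for $k=2,3$ are correct. But the proof as written has a genuine gap: the inductive step for general $k$ --- the claim that the counts of non-star connected shapes can be eliminated through a triangular ``peeling'' system --- is never established, and you flag it yourself as the step still requiring the most care. The difficulty is real: the coefficients $a_\lambda$ only see component sizes, so \emph{all} connected $k$-subsets, regardless of shape, contribute to a single hook coefficient $a_{(ks-k+1,1^{\cdots})}$; for $k\geq 4$ there are several non-star shapes, each with its own multiplicity in the peeling counts (a star triple decomposes in $3$ ways, a path in $2$, and the analogous multiplicities at level $k$ depend on the shape). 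You would need to show that the linear relations obtained by peeling, together with the lower $N_j$, suffice to solve for $N_k$ without knowing the individual shape counts --- and no argument for this is given. As it stands the heart of the induction is missing.

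For comparison, the paper's proof sidesteps connected subsets entirely by counting parts of size $1$ rather than stars: with $D_j$ the number of degree-$j$ vertices and $m=\tfrac{n-1}{s-1}$, the quantity $(-1)^{m-i}\sum_{\len\lambda=k_i}c_\lambda\cdot 1(\lambda)$, where $k_i=n-(s-1)(m-i)$ so that the sum ranges over $A\subseteq E$ with $|A|=m-i$, counts pairs $(A,v)$ with $v$ isolated in $(V,A)$. A vertex of degree $j$ is isolated for exactly $\binom{m-j}{i-j}$ such subsets $A$, so this quantity equals $\sum_{j=1}^{i}\binom{m-j}{i-j}D_j$, a triangular system that is immediately solvable for the $D_j$. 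In other words, replacing ``families of hyperedges through a vertex'' by ``families of hyperedges avoiding a vertex'' is exactly the substitution that makes the bookkeeping trivial: no classification of sub-hypertree shapes, and no Helly property, is needed. If you want to complete your own route, you would do better to adopt this dual count than to pursue the shape-elimination induction.
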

\begin{proof}
Let $H = (V,E)$ be an $s$-uniform hypertree on $n$ vertices.
Thus $H$ must have $m = \tfrac{n-1}{s-1}$ hyperedges.
Let $X_H = \sum c_{\lambda} p_{\lambda}$ and let $D_i$ denote the number of vertices of a degree $i$ in $H$.
It suffices to show that we can determine the numbers $D_i$ for $1 \leq i \leq m$.
Since $H$ is a hypertree and hence connected, we must have $D_0 = 0$.
For any $\lambda \vdash n$ let $1(\lambda)$ denote the number of parts of size $1$ in $\lambda$.
Recall that if $A \subseteq E$, then $\len \lambda(A) = n - (s-1)|A|$.
Any $1$ in the partition $\lambda(A)$ must come from a vertex of degree at most $m - |A|$.
Now for any integer $0 \leq i \leq m$ let us consider partitions $\lambda$ with $\len \lambda = k_i$ where $k_i = n - (s-1)(m-i)$.
Exactly the vertices of $H$ of degree at most $i$ will contribute to the sum
$$(-1)^{m-i}\sum_{\substack{\lambda \vdash n \\ \len \lambda = k_i}} c_{\lambda} \cdot 1(\lambda).$$
Note that a vertex of degree $j$ will contribute to the sum exactly $\binom{m-j}{i-j}$ times.
It follows that
$$(-1)^{m-i}\sum_{\substack{\lambda \vdash n \\ \len \lambda = k_i}} c_{\lambda} \cdot 1(\lambda) = \sum_{j=1}^i \binom{m-j}{i-j} D_j.$$
This gives a triangular system that we can solve for each $D_i$.
Therefore $X_H$ determines the degree sequence of a hypertree $H$.
\end{proof}

We conclude this section by showing that the chromatic symmetric function is not a complete invariant among uniform hypertrees.
We give two pairs of $3$-uniform hypertrees on $21$ vertices which are not isomorphic, but have the same chromatic symmetric function.
These hypertrees were found by using nauty~\cite{nauty} to enumerate all $3$-uniform hypertrees up to isomorphism and then using SageMath~\cite{sagemath} to compute the chromatic symmetric functions.
For completeness the SageMath code which can be used to test this is included the Appendix.
The computation indicates that the examples are minimal.
That is, there does not exist a pair of hypertrees on fewer than $21$ vertices which are not isomorphic but have the same chromatic symmetric function.
Let $H_1 = (V, E_1)$, $H_2 = (V, E_2)$, $H_3 = (V, E_3)$, and $H_4 = (V, E_4)$ where $V = \{0,1,\dots, 20\}$ and
\begin{align*}
E_1 &= \{\{0, 1, 2\}, \{0, 3, 4\}, \{1, 5, 6\}, \{0, 7, 8\}, \{2, 9, 10\}, \{1, 11, 12\}, \{9, 13, 14\},\\
 & \qquad\qquad \{16, 3, 15\}, \{17, 18, 7\}, \{19, 20, 13\}\} \\
\\
E_2 &= \{\{0, 1, 2\}, \{0, 3, 4\}, \{1, 5, 6\}, \{0, 7, 8\}, \{2, 9, 10\}, \{1, 11, 12\}, \{9, 13, 14\},\\
& \qquad\qquad \{16, 3, 15\}, \{17, 18, 5\}, \{19, 20, 15\}\} \\
\\
E_3 &= \{\{0, 1, 2\}, \{0, 3, 4\}, \{1, 5, 6\}, \{0, 7, 8\}, \{5, 9, 10\}, \{5, 11, 12\}, \{0, 13, 14\}, \\
& \qquad\qquad \{16, 2, 15\}, \{1, 17, 18\}, \{19, 20, 15\}\} \\
\\
E_4 &=  \{\{0, 1, 2\}, \{0, 3, 4\}, \{1, 5, 6\}, \{0, 7, 8\}, \{2, 9, 10\}, \{1, 11, 12\}, \{0, 13, 14\}, \\
& \qquad\qquad \{16, 9, 15\}, \{17, 18, 9\}, \{3, 19, 20\}\}.
\end{align*}
One can check that $H_1$, $H_2$, $H_3$, and $H_4$ are all $3$-uniform hypertrees on $21$ vertices and that $X_{H_1} = X_{H_2}$ and $X_{H_3} = X_{H_4}$
However, $H_1$ is not isomorphic to $H_2$ and $H_3$ is not isomorphic to $H_4$.
The hypertrees $H_1$ and $H_2$ are shown in Figure~\ref{fig:H1H2}.
The hypertrees $H_3$ and $H_4$ are shown in Figure~\ref{fig:H3H4}.

\begin{figure}
\centering
\includegraphics[scale = 0.8]{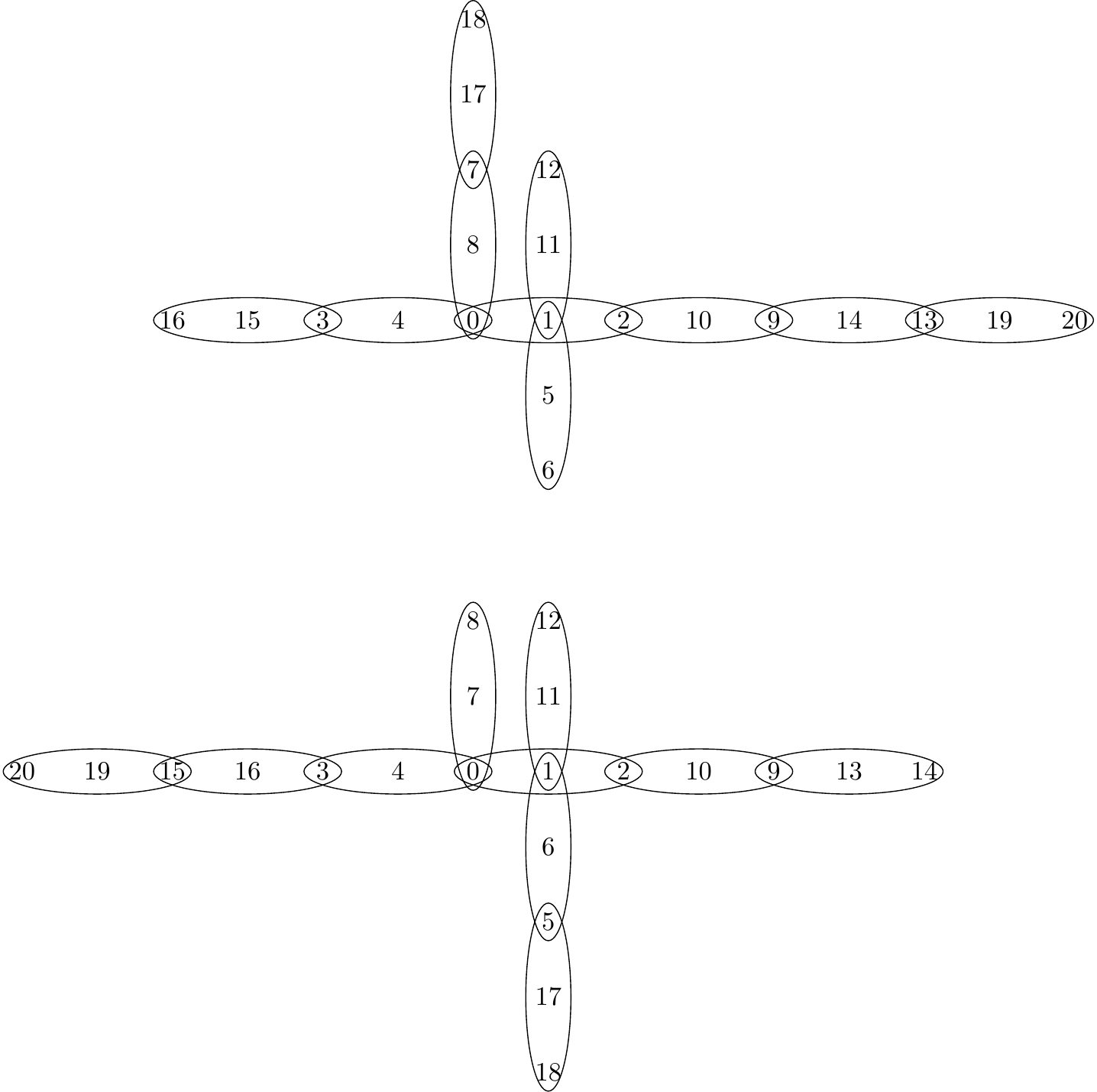}
\caption{The hypertree $H_1$ above and the hypertree $H_2$ below which are not isomorhpic but have the same chromatic symmetric function.}
\label{fig:H1H2}
\end{figure}

\begin{figure}
\centering
\includegraphics[scale = 0.8]{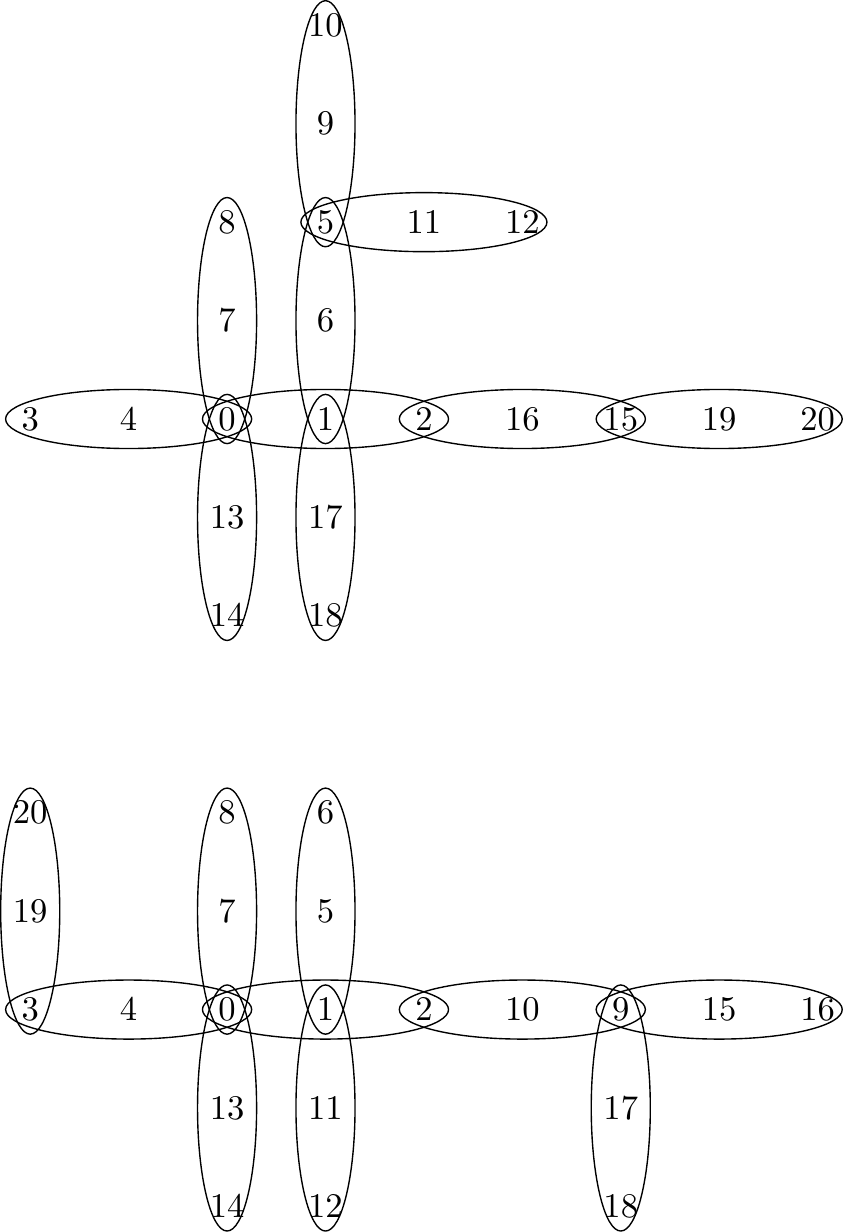}
\caption{The hypertree $H_3$ above and the hypertree $H_4$ below which are not isomorhpic but have the same chromatic symmetric function.}
\label{fig:H3H4}
\end{figure}

\section{More types of coloring}
\label{sec:other}
In this section we show that plurigraph coloring can be used to model oriented coloring and acyclic coloring.
Moreover, the plurigraphs used to model these types of coloring fall outside the realm of those used to model hypergraph coloring.
This shows that plurigraphs provide a new and uniform framework for studying many types of coloring problems and their associated nc-symmetric functions.

\subsection{Oriented coloring}
An \emph{oriented graph} $\vec{G} = (V, \vec{E})$ is an orientation on some simple graph $G = (V,E)$.
This means an oriented graph is a directed graph with no loops or opposite arcs.
A \emph{tournament} is an orientation of a complete graph.
A proper coloring of an oriented graph $\vec{G} = (V, \vec{E})$ is a map $f: V \to \P$ such that:
\begin{itemize}
\item If $(u,v) \in \vec{E}$, then $f(u) \neq f(v)$.
\item If $(u_1, v_1), (u_2, v_2) \in \vec{E}$, then $f(u_1) \neq f(v_2)$ or $f(u_2) \neq f(v_1)$.
\end{itemize}
Notice if $f: V \to \P$ is a proper coloring using $k$ colors, then $f$ will induce a homomorphism from $\vec{G}$ to a tournament $\vec{K}_k$.
When viewed in this sense, we see how oriented coloring is an oriented analog of usual graph coloring.
Oriented coloring was first defined by Courcelle~\cite{C94}.
For a survey of oriented coloring see~\cite{S01,S16}.

Oriented coloring is a graph-like scheduling problem.
For an oriented graph $\vec{G} = (V, \vec{E})$ we let $S_{\vec{G}}$ denote the scheduling problem corresponding to giving a proper oriented coloring of the vertices of $\vec{G}$.
We then have
$$S_{\vec{G}} = \left( \bigwedge_{(u,v) \in \vec{E}} (x_u \neq x_v) \right) \wedge \left(\bigwedge_{((u,v), (u',v')) \in \vec{E} \times \vec{E}} \left( (x_u \neq x_{v'}) \vee (x_{u'} \neq x_v) \right) \right).$$
Also, we let $\G_{\vec{G}}$ denote the plurigraph corresponding to $S_{\vec{G}}$, and so $\G_{\vec{G}} = (V, \E_{\vec{G}})$ where
$$\E_{\vec{G}} = \{(V,\{uv\}) : (u,v) \in \vec{E}\} \cup \{(V, \{uv', u'v\}) : ((u,v), (u',v')) \in \vec{E} \times \vec{E}\}.$$

\begin{ex}
Let $\vec{G} = ([4], \{(1,2), (3,4)\})$, then 
$$S_{\vec{G}} = (x_1 \neq x_2) \wedge (x_3 \neq x_4) \wedge ((x_1 \neq x_4) \vee (x_3 \neq x_2)).$$
Giving a proper coloring of the oriented graph $\vec{G}$ is equivalent to giving a proper coloring of the plurigraph 
$$\G _{\vec{G}}= ([4], \{([4], \{12\}), ([4], \{34\}), ([4], \{14, 23\})\}).$$
Visual representations of $\vec{G}$ and $\G_{\vec{G}}$ can be seen in Figure~\ref{fig:oriented}.
\label{ex:oriented}
\end{ex}

Notice that Example~\ref{ex:oriented} shows that oriented graph coloring in general cannot be modeled with graph coloring or hypergraph coloring.
The pluriedge $([4], \{14, 23\})$ has two connected components which are not singletons.
For the plurigraph corresponding to a hypergraph as in Example~\ref{ex:graph} each pluriedge has a unique connected component which is not a singleton.

\begin{figure}
\centering
\includegraphics[scale = 0.8]{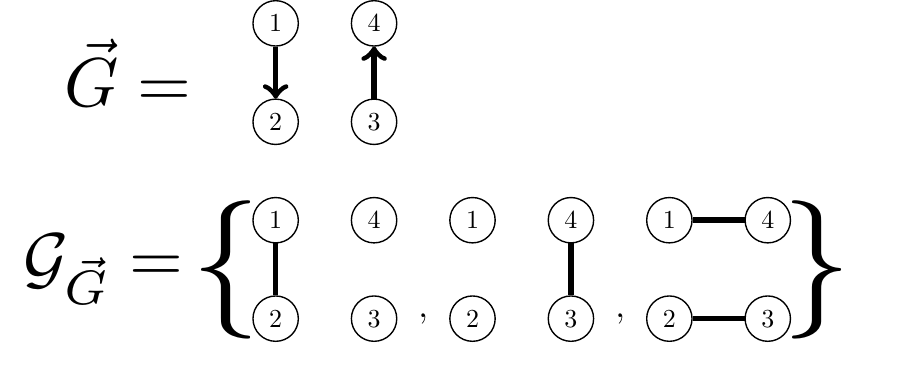}
\caption{A visual representation of $\vec{G}$ and $\G_{\vec{G}}$ from Example~\ref{ex:oriented}}
\label{fig:oriented}
\end{figure}

\subsection{Acyclic coloring}
Given a graph $G = (V,E)$ an \emph{acyclic coloring} of $G$ is $f:V \to \P$ such that:
\begin{itemize}
\item The map $f$ is a proper coloring of $G$.
\item Each cycle in $G$ uses at least three colors.
\end{itemize}
Acyclic coloring was introduced by Gr{\"u}nbaum~\cite{Gr73}.
For an overview of results on acyclic coloring and a discussion of applications of acyclic coloring we refer the reader to~\cite[Section 9]{plane}.
Notice that $f:V \to \P$ is an acyclic coloring if and only if $f$ is a proper coloring and $f$ uses at least three colors on every even length cycle of $G$.
Let $\C(G)$ denote the collection of all even length cycles of $G$.
For any $C \in \C(G)$ let $\len C$ denote the length of the cycle, and let $C_i$ denote the $i$th vertex of $C$ so edges of $C$ are of the form $C_i C_{i+1}$ with indices taken module $\len C$.
Thus, we have the following graph-like scheduling problem $S_{a,G}$ defined by the formula below
$$\bigwedge_{uv \in E} (x_u \neq x_v) \wedge \bigwedge_{C \in \C(G)} \left(\bigvee_{0 \leq i < j< \frac{\len C}{2}} (x_{C_{2i+1}} \neq x_{C_{2j+1}}) \vee \bigvee_{0 < i < j \leq \frac{\len C}{2}} (x_{C_{2i}} \neq x_{C_{2j}})\right) $$
which corresponds to giving an acyclic coloring of $G$.

\begin{ex}
Let $G = ([4], \{12,23,34,14\})$ be the $4$-cycle, then
$$S_{a,G} = (x_1 \neq x_2) \wedge (x_2 \neq x_3) \wedge (x_3 \neq x_4) \wedge (x_1 \neq x_4) \wedge ((x_1 \neq x_3) \vee (x_2 \neq x_4)).$$
This graph-like scheduling problem corresponds to coloring the plurigraph $\G_{a,G}$ where $G$ and $\G_{a,G}$ are shown in Figure~\ref{fig:acyclic}.
Again we see this coloring problem cannot be realized using hypergraphs.
\label{ex:acyclic}
\end{ex}

\begin{figure}
\centering
\includegraphics[scale = 0.8]{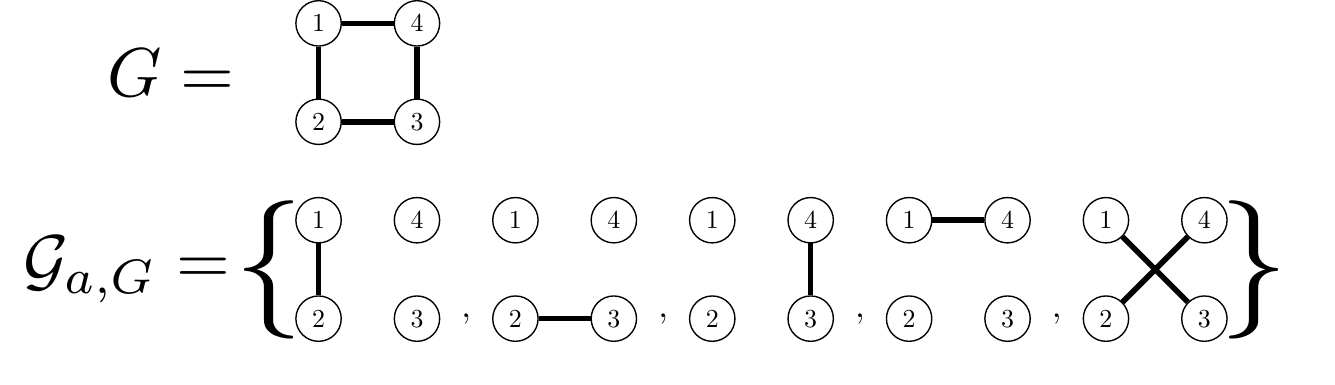}
\caption{The graph $G$ and plurigraph $\G_{a,G}$ from Example~\ref{ex:acyclic}}
\label{fig:acyclic}
\end{figure}

One equivalent characterization of acyclic coloring is that it is a map $f: V \to \P$ such that $f$ is a proper coloring and any subgraph of $G$ induced by any two color classes of $f$ must be a forest.
A \emph{star} is any complete bipartite graph $K_{1,n}$ for $n \geq 0$.
That is, a star is a tree on $n +1$ vertices with a vertex of degree $n$.
A \emph{star forest} is a forest in which all components of stars.
One strengthening of an acyclic coloring known as a \emph{star coloring} was also introduced by Gr{\"u}nbaum~\cite{Gr73}. A star coloring is a map $f: V \to \P$ such that $f$ is a proper coloring and any subgraph of $G$ induced by any two color classes of $f$ must be a star forest.
We see star coloring is also an instance of plurigraph coloring.

A map $f:V \to \P$ is a star coloring if it is a proper coloring and has no bipartite $P_4$, where $P_4$ is the path on $4$ vertices.
Let $\mathcal{P}_4(G)$ denote the set of paths on 4 vertices in $G$.
For any $P \in \mathcal{P}_4(G)$ let $P_i$ denote the $i$th vertex of $P$ so that $P$ has edge set $\{P_1P_2, P_2P_3, P_3P_4\}$.
We have the following graph-like scheduling problem
$$S_{s,G} := \left(\bigwedge_{uv \in E} (x_u \neq x_v)\right) \wedge \left(\bigwedge_{P \in \mathcal{P}_4(G)} \left((x_{P_1} \neq x_{P_3}) \vee (x_{P_2} \neq x_{P_4})\right) \right)$$
corresponding to giving a star coloring of $G$.

\section*{Acknowledgments}

The author wishes to thank the anonymous reviewer for his or her reading and helpful suggestions.

\appendix
\section*{Appendix: SageMath code}
\label{app}

In this appendix we include the SageMath code used to compute the chromatic symmetric functions of the hypertrees given in Section~\ref{sec:dist}.
At this time the chromatic symmetric function is implemented for graphs in SageMath, but is not yet implemented for hypergraphs.
So, below we include our code.
The function \texttt{CSF(V,E)} returns the chromatic symmetric function of a hypergraph with vertex set \texttt{V} and hyperedge set \texttt{E}.
It uses the powersum expansion of the chromatic symmetric function.
We have also included the functions which find the connected components of a hypergraph.
The hypertrees \texttt{H1}, \texttt{H2}, \texttt{H3}, and \texttt{H4} are the same hypertrees from Section~\ref{sec:dist}.

\begin{verbatim}
p = SymmetricFunctions(QQ).power()

def find_component(E,v):
# Find component containing the vertex v given hyperedge set E
    component = [v]
    Q = [v]
    while (len(Q)>0):
        u = Q.pop()
        for e in E:
            if u in e:
                for w in e:
                    if w not in component:
                        component.append(w)
                        Q.append(w)
    return component

def find_components(V,E):
# Find all components of the hypergraph (V,E)
    done = []
    components = []
    for v in V:
        if v not in done:
            c = find_component(E,v)
            done.extend(c)
            components.append(c)
    return components

def find_components_partition(V,E):
# Find the integer partition for the hypergraph (V,E)
    components = find_components(V,E)
    partition = map(len, components)
    partition.sort(reverse=True)
    return partition

def CSF(V,E):
    # Find CSF of the hypergraph (V,E)
    X = 0
    for A in subsets(E):
        X = X + (-1)^(len(A))*p(find_components_partition(V,A))
    return X

V = range(21)

E1 = ((0,1,2),(0,3,4),(1,5,6),(0,7,8),(2,9,10),(1,11,12),(9,13,14),
      (16,3,15),(17,18,7),(19,20,13))

E2 = ((0,1,2),(0,3,4),(1,5,6),(0,7,8),(2,9,10),(1,11,12),(9,13,14),
      (16,3,15),(17,18,5),(19,20,15))

E3 = ((0,1,2),(0,3,4),(1,5,6),(0,7,8),(5,9,10),(5,11,12),(0,13,14),
      (16,2,15),(1,17,18),(19,20,15))

E4 = ((0,1,2),(0,3,4),(1,5,6),(0,7,8),(2,9,10),(1,11,12),(0,13,14),
      (16,9,15),(17,18,9),(3,19,20))

H1 = Hypergraph(E1)
H2 = Hypergraph(E2)
H3 = Hypergraph(E3)
H4 = Hypergraph(E4)

H1.is_isomorphic(H2)
CSF(V,E1) == CSF(V,E2)

H3.is_isomorphic(H4)
CSF(V,E3) == CSF(V,E4)

\end{verbatim}

\begin{thebibliography}{10}

\bibitem{PZ}
Jos{\'e} Aliste-Prieto and Jos{\'e} Zamora.
\newblock Proper caterpillars are distinguished by their chromatic symmetric
  function.
\newblock {\em Discrete Math.}, 315:158--164, 2014.

\bibitem{BHM}
Carolina Benedetti, Joshua Hallam, and John Machacek.
\newblock Combinatorial {H}opf {A}lgebras of {S}implicial {C}omplexes.
\newblock {\em SIAM J. Discrete Math.}, 30(3):1737--1757, 2016.

\bibitem{plane}
O. V. Borodin.
\newblock Colorings of plane graphs: {A} survey.
\newblock {\em Discrete Math.}, 313(4):517--539, 2013.

\bibitem{BL07}
Mieczys{\l}aw Borowiecki and Ewa {\L}azuka.
\newblock On chromaticity of hypergraphs.
\newblock {\em Discrete Math.}, 307(11-12):1418--1429, 2007.

\bibitem{BK}
Felix Breuer and Caroline~J. Klivans.
\newblock Scheduling problems.
\newblock {\em J. Combin. Theory Ser. A}, 139:59--79, 2016.

\bibitem{C94}
Bruno Courcelle.
\newblock The monadic second order logic of graphs. {VI}. {O}n several
  representations of graphs by relational structures.
\newblock {\em Discrete Appl. Math.}, 54(2-3):117--149, 1994.
\newblock Efficient algorithms and partial $k$-trees.


\bibitem{dmn:vcsc}
N.~Dobrinskaya, J.~M. M{\o}ller, and D.~Notbohm.
\newblock Vertex colorings of simplicial complexes.
\newblock \arxiv{1007.0710v1}, 2010.

\bibitem{GS}
D.~Gebhard and B.~Sagan.
\newblock A chromatic symmetric function in noncommuting variables.
\newblock {\em J. Alg. Combin.}, 13:227--255, 2001.

\bibitem{GK05}
Ira~M. Gessel and Louis~H. Kalikow.
\newblock Hypergraphs and a functional equation of {B}ouwkamp and de {B}ruijn.
\newblock {\em J. Combin. Theory Ser. A}, 110(2):275--289, 2005.

\bibitem{Gr73}
Branko Gr{\"u}nbaum.
\newblock Acyclic colorings of planar graphs.
\newblock {\em Israel J. Math.}, 14:390--408, 1973.

\bibitem{Lovasz}
L.~Lov{\'a}sz.
\newblock Coverings and colorings of hypergraphs.
\newblock In {\em Proceedings of the Fourth Southeastern Conference on
  Combinatorics, Graph Theory, and Computing}, pages 3--12, 1973.

\bibitem{fpsac}
John Machacek.
\newblock Scheduling problems and generalized graph coloring.
\newblock In {\em Proceedings of {FPSAC} 2016}, Discrete Math. Theor. Comput.
  Sci. Proc., pages 791--802. Assoc. Discrete Math. Theor. Comput. Sci., Nancy,
  2016.

\bibitem{MMW}
Jeremy~L. Martin, Matthew Morin, and Jennifer~D. Wagner.
\newblock On distinguishing trees by their chromatic symmetric functions.
\newblock {\em J. Combin. Theory Ser. A}, 115(2):237--253, 2008.

\bibitem{nauty}
Brendan~D. McKay and Adolfo Piperno.
\newblock Practical graph isomorphism, {II}.
\newblock {\em J. Symbolic Comput.}, 60:94--112, 2014.

\bibitem{github}
Keeler Russell.
\newblock csf.
\newblock \url{https://github.com/keeler/csf}, 2012.
\newblock Github repository.

\bibitem{sagemath}
The~Sage Developers.
\newblock {\em {S}ageMath, the {S}age {M}athematics {S}oftware {S}ystem
  ({V}ersion 7.3)}, 2016.
\newblock \url{https://www.sagemath.org}.

\bibitem{S01}
Eric Sopena.
\newblock Oriented graph coloring.
\newblock {\em Discrete Math.}, 229(1-3):359--369, 2001.
\newblock Combinatorics, graph theory, algorithms and applications.

\bibitem{S16}
\'E. Sopena.
\newblock Homomorphisms and colourings of oriented graphs: an updated survey.
\newblock {\em Discrete Math.}, 339(7):1993--2005, 2016.

\bibitem{EC2}
Richard~P. Stanley.
\newblock {\em Enumerative combinatorics. {V}ol. 2}, volume~62 of {\em
  Cambridge Studies in Advanced Mathematics}.
\newblock Cambridge University Press, Cambridge, 1999.
\newblock With a foreword by Gian-Carlo Rota and appendix 1 by Sergey Fomin.


\bibitem{s:hyper}
R.~P. Stanley.
\newblock Graphs colorings and related symmetric functions: Ideas and
  applications.
\newblock {\em Discrete Math.}, 193:267--286, 1998.

\bibitem{s:sfgcpg}
Richard~P. Stanley.
\newblock A symmetric function generalization of the chromatic polynomial of a
  graph.
\newblock {\em Advances in Math.}, 111:166--194, 1995.

\bibitem{Tay17}
Jair~Taylor.
\newblock Chromatic symmetric functions of hypertrees.
\newblock {\em Electron. J. Combin.}, 24(2) (2017), \#P2.2.
\end{thebibliography}
\end{document}